\newtheorem{theorem}{Theorem}[section]
\newtheorem{proposition}[theorem]{Proposition}
\newtheorem{lemma}[theorem]{Lemma}
\theoremstyle{definition}
\numberwithin{equation}{section}
\newcommand{\F}{{\mathcal{F}}}
\newcommand{\G}{{\mathcal{G}}}
\newcommand{\C}{\mathbb{C}}
\newcommand{\OO}{\mathcal{O}}
\newcommand{\PP}{\mathbb P}
\newcommand{\Z}{\mathbb Z}
\newcommand{\aut}{\operatorname{Aut}}
\newcommand{\gl}{\operatorname{GL}}
\newcommand{\pgl}{\operatorname{PGL}}
\newcommand{\SL}{\operatorname{SL}}
\newcommand{\sing}{\operatorname{Sing}}
\newcommand{\dv}{\operatorname{div}}
\newcommand{\tr}{\operatorname{tr}}
\newcommand{\Pf}{\mathcal{P}}
\newcommand{\Hf}{\mathcal{H}}
\title[Foliations with finite group of symmetries]{Foliations on the projective plane with finite group of symmetries}
\date{\today}
\author[Muniz]{Alan Muniz}
\author[Rosas]{Rudy Rosas}
\address[A. Muniz]{ Programa de Pós-Graduação em Matemática -- CCE, Universidade Federal do Espírito Santo -- UFES, Av. Fernando Ferrari 514, 29075-910, Vitória -- ES, Brasil.}
\curraddr{ Instituto de Matemática e Estatatística
  \\ Universidade Federal Fluminense, UFF -- Rua Prof. Marcos Waldemar de Freitas Reis, S/N -- Bloco H, 24210-201,
  Niterói--RJ, Brasil.}
\email{alannmuniz@gmail.com}
\address[R. Rosas]{ Departamento de Ciencias, Pontificia Universidad Cat\'olica del Per\'u, Av. Universitaria 1801, Lima, Per\'u}\email{rudy.rosas@pucp.pe}
\subjclass[2010]{ 37F75 - 32M25 - 20B25} \keywords{Automorphism -
Holomorphic foliations - Finite Groups - Invariant Theory}
\begin{document}

\begin{abstract}
Let $\mathcal{F}$ denote a singular holomorphic foliation on $\mathbb{P}^2$ having a  finite automorphism group $\aut(\mathcal{F})$.  Fixed the degree of $\mathcal{F}$, we determine the maximal value that $|\aut(\mathcal{F})|$ can take and explicitly exhibit all the foliations attaining this maximal value.  Furthermore, we classify the foliations with large but finite automorphism group. 
\end{abstract}

\maketitle

\section{Introduction}
The presence of symmetries has been used to understand some relevant problems in holomorphic foliation theory, especially concerning integrability. For instance, it is used in the construction of Jouanolou's foliations \cite{J}, which play an important role in his proof of the density of foliations on the complex projective plane $\PP^2$ without invariant algebraic curves; in \cite{PSA} this relation between automorphism groups and integrability was explored and put into more concrete terms. 

A new perspective for the study of these automorphism groups arises after the birational classification of foliated surfaces, given independently by McQuillan and Mendes -- see \cite{Bru}. This is a foliated counterpart of the classical Enriques-Kodaira work on surfaces. From the works of Hurwitz and Klein for curves and the work of Andreotti in higher dimension, the question about the finiteness of automorphism groups of foliations appears naturally. In 2002, Pereira and Sánchez \cite{PS} proved a foliated version of Andreotti's theorem: foliated surfaces of general type have finite self-bimeromorphism groups. Then arises the question: how large these finite groups can be? To propose a more precise question we  
restrict our study to the case of singular holomorphic foliations  on $\PP^2$: is there a function $f\colon\mathbb{N}\to\mathbb{N}$ such that, for any singular holomorphic foliation $\mathcal{F}$ of degree $d$  on $\PP^2$ with $|\aut(\F)|<+\infty$ we have \[ |\aut(\F)|\le f(d) ?\]
if the response is yes, what is the minimal function $f$?
In \cite{autfol}, Corrêa and the first author of this paper show that the function $f(d)=3(d^2+d+1)$ bounds the order of $\aut(\F)$ for a generic class of foliations $\mathcal{F}$ of degree $d$. The maximal value $3(d^2+d+1)$ in this class is attained by the Jouanolou Foliation  \cite{J}.
In the present work, we show that  the bound $3\left(d^2+d+1\right)$ does not work for all foliations but we prove that the function $f$ indeed exists. First of all, recall that foliations of degree $0$ or $1$ have infinitely many automorphism, so we only need to look by a function $f$ defined on $\{2,3,\ldots\}$.
\begin{theorem}\label{f} Let $f\colon \{2,3,\ldots\}\to\mathbb{N}$ be defined by  $f(2)=24$, $f(3)=39$, $f(4)=216$ and $f(d)=6(d-1)^2$ for $d\ge 5$. Then, if $\F$ is a degree $d$ foliation on $\PP^2$ such that $|\aut(\F)| <	+\infty$, we have
\[ |\aut(\F)|\le f(d). \]
Moreover, this bound is sharp: there exist foliations attaining this bound for any degree $d\ge 2$.
\end{theorem}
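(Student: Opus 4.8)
The plan is to exploit that a finite $\aut(\F)$ is a finite subgroup $G\subset\pgl(3,\C)=\aut(\PP^2)$ leaving $\F$ invariant, and to run the classical classification of finite subgroups of $\pgl(3,\C)$ against the constraint of preserving a degree-$d$ foliation. I would split $G$ according to its action on $\PP^2$: either (i) $G$ fixes a point or a line (reducible case); (ii) $G$ leaves invariant a triangle, i.e.\ $G$ lies in the normalizer $T\rtimes\mathfrak S_3$ of the diagonal torus $T$ (imprimitive case); or (iii) $G$ preserves a smooth conic or is primitive --- a finite list comprising the polyhedral groups $A_4,\mathfrak S_4,A_5\subset\mathrm{PO}(3)\cong\pgl(2,\C)$ and the primitive groups: subgroups of the Hessian group (orders $36,72,216$), the Klein group $\mathrm{PSL}(2,7)$ (order $168$) and the Valentiner group $A_6$ (order $360$). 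The function $f(d)$ will emerge as the maximum, over these families, of the largest order compatible with an invariant degree-$d$ foliation.

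For the imprimitive case the heart of the matter is to bound the abelian diagonal part $A=G\cap T$. Writing $\F$ by a homogeneous vector field $X=P\,\partial_x+Q\,\partial_y+R\,\partial_z$ of degree $d$ modulo the radial field, invariance under $A$ forces $X$ to be a common eigenvector, so all its monomials lie in a single $A$-character coset. A Newton-polygon and character count then yields a dichotomy: if $A$ is cyclic it has order at most $d^2+d+1$ (the Jouanolou value, realized when $A$ permutes the $d^2+d+1$ singular points of $\F$ in one free orbit), whereas if $A$ is noncyclic it embeds in the $(d-1)$-torsion of $T$ and $|A|\le(d-1)^2$. Since the image of $G$ in $\mathfrak S_3$ has order at most $6$, and since a cyclic $A$ is normalized only by the $3$-cycle while the full square $(\Z/(d-1))^2$ admits all of $\mathfrak S_3$, the imprimitive bound is $\max\{3(d^2+d+1),\,6(d-1)^2\}$. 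A direct comparison shows $3(d^2+d+1)$ dominates for $d\le4$ and $6(d-1)^2$ for $d\ge5$; the reducible case is treated similarly and produces only smaller bounds.

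For the finitely many conic-preserving and primitive groups I would use invariant theory: for each such $G$ and each degree $d$ the existence of a $G$-invariant foliation is detected by a trivial summand in the $G$-module of degree-$d$ foliations, computed via characters and Molien series. This produces the exceptional records --- a degree-$2$ foliation with octahedral symmetry $\mathfrak S_4$ (order $24$) and a degree-$4$ foliation invariant under the full Hessian group (order $216$), giving $f(2)=24$ and $f(4)=216$ --- while the same computation shows that no primitive group acts on a degree-$3$ foliation with order above $39$, so the record $f(3)=3(3^2+3+1)=39$ is the Jouanolou (imprimitive) one. Crucially, since $6(d-1)^2>360$ for $d\ge9$, only the finite range $5\le d\le8$ must be checked to confirm that the Klein, Hessian and Valentiner groups never exceed $6(d-1)^2$ there. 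Taking the maximum over all cases yields exactly $f(2)=24$, $f(3)=39$, $f(4)=216$ and $f(d)=6(d-1)^2$ for $d\ge5$.

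Sharpness is then obtained by exhibiting an explicit extremal foliation in each regime: the Jouanolou foliation of degree $3$ (symmetry $\Z/13\rtimes\Z/3$, order $39$); the Hessian-invariant degree-$4$ foliation (order $216$); an $\mathfrak S_4$-invariant degree-$2$ foliation (order $24$); and, for $d\ge5$, the Fermat-type foliation defined by $X=x(\beta y^{d-1}+\gamma z^{d-1})\partial_x+y(\gamma z^{d-1}+\alpha x^{d-1})\partial_y+z(\alpha x^{d-1}+\beta y^{d-1})\partial_z$, invariant under $(\Z/(d-1))^2\rtimes\mathfrak S_3$ of order $6(d-1)^2$. The main obstacle I anticipate is twofold: establishing the sharp diagonal bounds ($d^2+d+1$ in the cyclic case, $(d-1)^2$ otherwise) rigorously from the monomial/character analysis, and verifying that each exhibited foliation has no extra symmetry --- that is, that its automorphism group is exactly the claimed finite group and does not accidentally enlarge or become infinite, which forces a generic choice of $\alpha,\beta,\gamma$ and a careful analysis of the possible normalizers.
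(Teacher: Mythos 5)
Your overall architecture --- Blichfeldt's classification of finite subgroups of $\pgl(3,\C)$, a sharp bound on the diagonal part in the imprimitive case, Molien-type character computations for the primitive groups, and explicit extremal examples --- is the same as the paper's. But there are two genuine gaps. The most serious is in your case (iii): you treat the polyhedral groups as part of a finite list of groups of bounded order ($12$, $24$, $60$), but in $\pgl(3,\C)$ the relevant subgroups are the projectivizations of finite non-abelian subgroups of $\gl(2,\C)$ acting by $[X:Y:Z]\mapsto[aX+bY:cX+dY:Z]$; the scalar matrices of $\gl(2,\C)$ act nontrivially on $\PP^2$, so these groups carry a cyclic central part whose order can be as large as $d$, giving orders up to $60d$. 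These are not negligible: they produce the Bernoulli foliations $\mathcal{P}_5$ (order $96=6\cdot 4^2$) and $\mathcal{P}_{11}$ (order $600=6\cdot 10^2$), which actually attain $f(d)$. Your proposal would neither detect these nor verify that this family stays below $f(d)$ in the range $5\le d\le 11$ where $60d>6(d-1)^2$; the paper devotes all of Section \ref{poly} to exactly this point, decomposing each homogeneous semi-invariant piece as $\omega_j=dP_j+Q_j(xdy-ydx)$ and playing the degrees of $P_j$, $Q_j$ against the sizes of the smallest orbits of $T$, $O$, $I$ on $\PP^1$.

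The second gap is the central quantitative claim of your imprimitive analysis. The dichotomy ``cyclic $A$ has order at most $d^2+d+1$ and is normalized only by the $3$-cycle; noncyclic $A$ embeds in the $(d-1)$-torsion'' is asserted rather than proved, and it is not correctly formulated: cyclicity is not the relevant invariant (small cyclic diagonal groups are certainly normalized by transpositions), and nothing in your Newton-polygon sketch explains why the diagonal part drops from $d^2+d+1$ to $(d-1)^2$ precisely when the full $S_3$ acts. The paper's mechanism is different: Proposition \ref{pro1} shows $|A|\le d^2+d+1$ for \emph{every} finite diagonal group, with equality only for Jouanolou-type forms, and then, in the presence of the extra transposition $R$, a GSV-index/tangency symmetry between the two points where an invariant line meets the coordinate triangle forces the extreme exponents of $\omega$ to satisfy $k+l=d+1$; this pins the monomial set down to $\{xy,x^dy,xy^d\}$ (yielding $(d-1)^2$ for the diagonal part and the foliations $\F_d$, $\G_d$) or yields a much smaller group. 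Without an argument of this kind your bound $\max\{3(d^2+d+1),\,6(d-1)^2\}$ is unsupported. Two smaller points: the degree-$2$ extremal foliation arises in the paper inside the imprimitive case (line at infinity generically transverse), not as a conic-preserving $S_4$; and for sharpness one must still prove that the exhibited foliations have automorphism group \emph{exactly} of order $6(d-1)^2$ and finite --- in the paper this follows from the completeness of the classification rather than from a separate normalizer computation, which your proposal correctly identifies as an obstacle but does not resolve.
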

Furthermore, our main goal is to give a complete classification of the foliations with large but finite automorphism group. In particular, we present all the foliations attaining the bound $f(d)$ in Theorem \ref{f}. 

\begin{theorem}\label{thmp}
	Let $\F$ be a foliation of degree $d\ge 2$ on $\PP^2$  such that 
	\[
	3(d^2+d+1) \le |\aut(\F)| \le f(d).	
	\]
	Then $\F$ is projectively equivalent to one of the foliations in the following table --  $\overline{T}$  and $\overline{I}$ are the binary tetrahedral and icosahedral groups, respectively.\\

\begin{table}[h]
	\centering
	\renewcommand{\arraystretch}{1.2}
	\begin{tabular}{| l | c | c | c | l |}
	
		\hline
		Fol.& degree & $|\aut(\F)|$ & $\aut(\F)$  &  description \\ 
		\hline
		$\mathcal{J}_d$	& $ d$ & $3(d^2+d+1)$ & $\Z/(d^2 +d + 1)\Z\rtimes \Z/3\Z$  & Jouanolou foliation\\
		\hline
		$\G_d$	& \multirow{2}{*}{$d \geq 5$} & \multirow{2}{*}{$6(d-1)^2$} &  \multirow{2}{*}{$(\Z/(d-1)\Z)^2\rtimes {S}_3$}   & nonisotrivial hyperbolic fibration  \\ \cline{1-1}\cline{5-5}
		$\F_d$	&  &  &   & isotrivial hyperbolic fibration \\[0.1ex] 
		\hline
		$\Pf_5$	& $5$ & $96$ & $\left(\Z/2\Z \times \overline{T}\right)\cdot \Z/2\Z$ & \multirow{2}{*}{general type Bernoulli foliation } \\ \cline{1-4}
		$\Pf_{11}$	& $11$ & $600$ & $\Z/5\Z \times\overline{I}$  &  \\ \hline
		$\mathcal{S}$	& $ 2$ & $24$ & $(\Z/2\Z)^2\rtimes {S}_3$  &  rational fibration\\ 
		
		\hline
		$\Hf_4$	& $4$ & \multirow{2}{*}{$216$} & \multirow{2}{*}{Hessian group}   & nonisotrivial elliptic fibration \\ \cline{1-2}\cline{5-5}
		$\Hf_7$	& $7$ & &   & nonisotrivial hyperbolic fibration \\		\hline
	\end{tabular}
	\label{tabfol}
	\end{table}
	In particular, the extremal equality $|\aut(\F)|=f(d)$ is attained by the fibration $\mathcal{S}$ if $d=2$, the Jouanolou foliation $\mathcal{J}_3$ if $d=3$, the foliation $\mathcal{H}_4$ if $d=4$, the foliations $\mathcal{G}_5$,
	 $\mathcal{F}_5$ and $\mathcal{P}_5$ if $d=5$, and the foliations  $\mathcal{G}_d$ and $\mathcal{F}_d$ if $d\ge 6$. 
\end{theorem}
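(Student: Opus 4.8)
The plan is to classify the finite group $G:=\aut(\F)$ as an abstract subgroup of $\pgl(3,\C)=\aut(\PP^2)$ and, for each admissible $G$, to determine all $G$-invariant degree $d$ foliations by invariant theory, finally reading off which ones genuinely have $G$ as their full automorphism group and satisfy the lower bound $|G|\ge 3(d^2+d+1)$. The starting point is that a degree $d$ foliation is a point of the projectivization of the space $E_d$ of homogeneous $1$-forms $\omega=A\,dx+B\,dy+C\,dz$ with $\deg A=\deg B=\deg C=d+1$ and $xA+yB+zC=0$; since $G$ acts linearly on $E_d$, a foliation is $G$-invariant exactly when its defining form spans a $G$-eigenline, i.e. is a semi-invariant. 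The multiplicity of a prescribed character in $E_d$ is computable from a Molien-type series, so for any fixed $G$ the invariant foliations form an explicit, usually low-dimensional, linear system. Once the table is established, the concluding ``in particular'' statement follows by matching the listed orders against the values of $f(d)$.

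First I would invoke the classical classification of finite subgroups of $\pgl(3,\C)$ (Blichfeldt, Miller--Blichfeldt--Dickson): every such group is intransitive (fixing a point or a line), imprimitive of monomial type $D\rtimes H$ with $D$ diagonal abelian and $H\le S_3$, or one of the finitely many primitive groups (the Hessian group of order $216$ and its subgroups, the Klein simple group of order $168$, and the tetrahedral, octahedral and icosahedral groups together with their relevant central extensions $\overline{T},\overline{I}$). The orders of the primitive groups and of these exceptional extensions are bounded by an absolute constant, so the hypothesis $|G|\ge 3(d^2+d+1)$ confines these cases to a finite range of degrees; for each such $d$ I would enumerate the semi-invariant forms directly. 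This produces the sporadic table entries --- the general-type Bernoulli foliations $\Pf_5,\Pf_{11}$ and the Hessian fibrations $\Hf_4,\Hf_7$ --- and, at the bottom of the window where $|G|=3(d^2+d+1)$, the small-degree Jouanolou foliations.

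For the infinite families the key point is that any finite abelian subgroup of $\pgl(3,\C)$ is simultaneously diagonalizable, and a diagonal subgroup $D$ of large order forces the semi-invariant form to be \emph{sparse}: if $|D|$ is comparable to $(d-1)^2$, the weight constraints allow only a number of admissible monomials growing linearly in $d$, and a direct analysis shows the resulting $\omega$ is closed (logarithmic), so that $\F$ is a pencil of curves, i.e. a fibration. This isolates the monomial groups $(\Z/(d-1)\Z)^2\rtimes S_3$ of order $6(d-1)^2=f(d)$ and their invariant pencils: the hyperbolic fibrations $\G_d$ (nonisotrivial) and $\F_d$ (isotrivial), and the rational fibration $\mathcal{S}$ in degree $2$. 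In the complementary non-fibration subcase, where $G$ has no large abelian part, I would let $G$ act on $\sing(\F)$, whose cardinality is at most $d^2+d+1$: the inequality $|G|\ge 3(d^2+d+1)$ severely restricts the orbit structure and, together with the absence of invariant algebraic curves, pins down the cyclic-by-$3$ group $\Z/(d^2+d+1)\Z\rtimes\Z/3\Z$; the semi-invariant computation then shows the Jouanolou foliation $\mathcal{J}_d$ is the unique such foliation up to projective equivalence.

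The main obstacle I anticipate is twofold. On one hand, for each surviving candidate one must compute the \emph{full} automorphism group and prove it equals the group listed, since a priori the invariant form could admit extra, non-monomial symmetries enlarging the stabilizer in $\pgl(3,\C)$; in the monomial case this also requires separating the isotrivial fibration $\F_d$ from the nonisotrivial $\G_d$, which share the abstract group $(\Z/(d-1)\Z)^2\rtimes S_3$ and are distinguished only by the variation (the $j$-invariant) of the fibres. On the other hand, the bookkeeping near the lower threshold is delicate: one must rule out every finite subgroup whose order happens to fall inside the window $[\,3(d^2+d+1),\,f(d)\,]$ but which either fixes no degree $d$ foliation, or fixes only foliations whose genuine automorphism group is strictly smaller and hence lies outside the window. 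This exhaustive elimination, carried out uniformly in $d$ for the infinite families and case by case in the finitely many exceptional degrees, is where the bulk of the technical work lies.
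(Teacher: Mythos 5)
Your overall architecture --- Blichfeldt's classification of finite subgroups of $\pgl(3,\C)$, then semi-invariant forms/Molien series case by case --- is the same as the paper's, but several of the steps you rely on are either missing or false as stated. The central quantitative input of the whole proof is a sharp bound for the \emph{diagonal} part: a finite group of diagonal transformations preserving a degree $d$ foliation has order at most $d^2+d+1$, with equality only for an explicit three-monomial form (the paper's Proposition \ref{pro1}, proved by a Bézout-type induction on the differences of exponent monomials of $\omega$). Your proposal replaces this with the vague statement that weight constraints allow ``a number of admissible monomials growing linearly in $d$''; without the exact constant $d^2+d+1$ you cannot compare against the threshold $3(d^2+d+1)$, and without the extremal characterization you cannot identify $\mathcal{J}_d$. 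Moreover, your claim that a large diagonal subgroup forces $\omega$ to be closed (logarithmic), hence a fibration, is false: the Jouanolou form $(x^dy-1)dx+(y^d-x^{d+1})dy$ is semi-invariant under a cyclic diagonal group of order $d^2+d+1$, is as sparse as possible, and defines a foliation with no first integral whatsoever. The dichotomy ``large abelian part $\Rightarrow$ fibration'' therefore cannot be the organizing principle; the paper instead pins down the form monomial by monomial in each imprimitive case. Your alternative route to $\mathcal{J}_d$ via the action on $\sing(\F)$ ``together with the absence of invariant algebraic curves'' is also not available: non-existence of invariant curves is a deep property of $\mathcal{J}_d$ that you would be assuming, not deriving.

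A second genuine gap is the treatment of the groups containing $\overline{T}$ and $\overline{I}$. These are \emph{intransitive} subgroups of $\pgl(3,\C)$, namely finite non-abelian subgroups of $\gl(2,\C)$ fixing a point of $\PP^2$, and their orders are \emph{not} bounded by an absolute constant: the scalar center $H$ can a priori be arbitrarily large (e.g.\ $\Z/n\Z\times\overline{I}$ for any $n$), so the inequality $|G|\ge 3(d^2+d+1)$ does not by itself confine $d$ to a finite range. The paper closes this by decomposing $\omega=\sum\omega_j$ into homogeneous pieces, showing each $\omega_j$ is separately semi-invariant, and deducing $|H|\le|j_2-j_1|\le d$, whence $|G|\le 60d$ and $d\le 18$; one then uses the orbit structure of the polyhedral groups on $\PP^1$ and the decomposition $\omega_j=dP_j+Q_j(x\,dy-y\,dx)$ to force $\omega$ into the Bernoulli shape $x\,dy-y\,dx+dP$. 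Filing these groups under ``primitive of bounded order'' skips exactly the estimate that makes the case finite. The primitive cases proper (Hessian group and its normal subgroups, $A_5$, $A_6$, $\operatorname{PSL}(2,\mathbb{F}_7)$) are where your Molien-series plan does match the paper, although even there the relevant module is the semi-invariant divergence-free vector fields, for which one needs the modified Molien formula of Theorem \ref{molienthm} rather than the classical one.
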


Besides the well known Jouanolou foliations $\mathcal{J}_d$,  the foliations $\F_d$, $\Hf_4$ and $\Hf_{11}$ are already described in the literature: they appear in \cite{MP} as examples of reduced convex foliations; these are, respectively, the Fermat foliations $\F_d$,  the Hesse pencil of cubics $\Hf_4$ and the foliation $\Hf_7$ associated to the extended Hesse configuration of lines.  The other foliations are described in Section \ref{examples} but it is worth mentioning here that, with the only exception of $\mathcal{J}_d$, all foliation in  Table \ref{tabfol} have some kind of first integral: the Bernoulli foliations have Liouvillian first integrals, and all the others have rational first integrals. Moreover, 
from Table \ref{tabfol} we can derive the following fact:
since the canonical bundle of $\F$ is $K_\F = \OO_{\PP^2}(d-1)$, we have a linear bound for $|\aut(\F)|$ in terms of the Chern number $K_\F^2$ -- this kind of bounds are obtained in \cite{CF} for foliations with ample canonical bundle on projective surfaces.

This paper is organized as follows. In Section \ref{definiciones} we recall some basic definitions and results about foliations. In Section \ref{examples} we describe the foliations already mentioned in our main theorems. In Section \ref{class} we reduce the proof of our main theorems to a variety of specific results according to the classification of finite groups of $\operatorname{PGL}(3,\mathbb{C})$. The remaining sections are devoted to prove the results presented in Section \ref{class}. It is worth mentioning that in Section \ref{moliensec} we develop a Molien-type formula -- Theorem \ref{molienthm} -- that is fundamental  to analyze the  transitive primitive groups. This formula leads to some simple but very long calculations that we have managed to do with Maple\texttrademark.

\subsection*{Acknowledgements} \small We would like to thank the referee for the suggestions, comments, and improvements to the exposition. The first named author was financed in part by the Coordena\c c\~ao de Aperfei\c coamento de Pessoal de N\'ivel Superior - Brasil (CAPES) - Finance Code 001. The second named author was supported  by Vicerrectorado de Investigaci\'on de la Pontificia Universidad Cat\'olica del Per\'u.

\section{Preliminaries}\label{definiciones}

\subsection*{Foliations on $\PP^2$}

In this paper we consider  {singular holomorphic foliations by curves} -- foliations for short -- with isolated singularities on $\PP^2$. Such a foliation can be defined in an affine chart $\mathbb{C}^2$ with coordinates $(x,y)$ by a polynomial vector field $P\partial_x+Q\partial_y$ with isolated singularities or even by a polynomial 1-form, for example the form 
$Qdx-Pdy$. Conversely, a polynomial vector field or 1-form with isolated singularities on $\mathbb{C}^2$ define a unique holomorphic foliation with isolated singularities on $\mathbb{P}^2$.  The degree of a foliation $\mathcal{F}$ -- denoted by $\deg(\F)$ -- is geometrically defined as the number of tangencies of $\mathcal{F}$ with a generic line in $\mathbb{P}^2$. This degree  reflects  on the  polynomial 1-form defining $\mathcal{F}$ in the following way. 
Let $\omega$ be a polynomial 1-form with isolated singularities defining $\F$ and write it in the form
$$\omega=\sum\limits_{j=0}^k(A_jdx+B_jdy),$$ where $A_j$ and $B_j$ are homogeneous polynomials of degree $j$, and $A_kdx+B_kdy\neq0$. Then  we have the following possibilities:
\begin{enumerate}
\item The polynomial $A_kx+B_ky$ is not zero; in this case the  line at infinity of $\mathbb{P}^2$ is invariant by $\mathcal{F}$ and we have  $\deg(\mathcal{F})=k$. 
\item The polynomial $A_kx+B_ky$ is zero; in this case the line  at infinity of $\mathbb{P}^2$ is generically transverse to $\mathcal{F}$ and we have  $\deg(\mathcal{F})=k-1$.
\end{enumerate}

In a more global point of view, a foliation $\mathcal{F}$ of degree $d$ can also be defined  by  homogeneous vector field in three variables: there exists a polynomial vector field 
 $$v=A\partial_X+B\partial_Y+C\partial_Z$$ of degree $d$ in $\mathbb{C}^3$ which  induce -- via the natural projection $\mathbb{C}^3\backslash\{0\}\to \mathbb{P}^2$ --  a singular holomorphic distribution of lines on $\mathbb{P}^2$ coinciding with the tangent distribution of $\mathcal{F}$.
 Another homogeneous vector field $w$ of degree $d$ induce  the same foliation $\mathcal{F}$ if and only if there exist
  $\alpha \in \C^{\ast}$ and a polynomial $P$ of degree $d-1$ such that $$w= \alpha v +P(X\partial_X+Y\partial_Y+Z\partial_Z).$$ From this it is easy to see that we can find a homogeneous vector field $w$ defining $\F$ and such that $\dv(w)=0$, where $\dv(w)$ is the divergence of $w$ with respect to $dX \wedge dY \wedge dZ$. This choice of vector field is called the Darboux normal form and it is unique -- up to scalar multiplication, see \cite[p.6]{J} and \cite[p.63]{D}

\subsection*{Automorphisms}

 A biholomorphism $\varphi \colon \mathbb{P}^2\to\mathbb{P}^2$ is an automorphism of $\F$ if $\varphi$ maps each leaf of $\F$ onto a leaf 
 of $\F$. The set of automorphisms of $\F$ is a group, which is denoted by $\aut(\F)$. If $v$ is a homogeneous vector
  field in $\mathbb{C}^3$ defining $\F$, then the image of an element  $\varphi \in \gl(3,\C)$ in $\pgl(3,\C)$  is an automorphism of $\F$ if and only if the pushforward $\varphi_*v=\varphi \cdot (v\circ \varphi^{-1})$ is also a 
  vector field defining $\F$.
 Hence, given a finite subgroup $G$ of $\gl(3,\C)$, its image in $\pgl(3,\C)$ preserves the foliation  defined by $v$ with $\dv(v) =0$  if there
  exists a character $\chi: G \longrightarrow \C^{\ast}$ such that, for all $\varphi\in G$,
\[
\varphi_{\ast}v = \chi(\varphi)v. 
\]
If this is the case, we also say that the vector field $v$ is $G$-semi-invariant. 
\section{Examples}\label{examples}
In this section we describe those foliations already mentioned in Theorem  \ref{thmp}. Hereafter we will work with homogeneous coordinates $(X:Y:Z)$ on $\mathbb{P}^2$ and $(x,y)$ will denote the natural coordinates in $\{Z \neq 0\}$.
\subsection{The Jouanolou Foliation $\mathcal{J}_d$}

The Jouanolou foliation of degree $d$, denoted by $\mathcal{J}_d$, is defined by the vector field
\[
	Y^d\partial_X  + Z^d\partial_Y + X^d\partial_Z
	\]
or, in affine coordinates, by the 1-form 
$$(x^dy-1)dx+(y^d-x^{d+1})dy.$$ The automorphism group of $\mathcal{J}_d$  has order $3(d^2+d+1)$ and it is generated by  the maps
 $[X:Y:Z]\mapsto[Y:Z:X]$ and
$[X:Y:Z]\mapsto [l^{d+1}X:lY:Z]$,
where $l^{d^2+d+1}=1$ with $l$ primitive. 
\subsection{The Fermat Foliation $\F_d$}
The Fermat foliation $\F_d$ of degree $d$  --  already described in \cite{MP} -- is  generated by the vector field 
$$X^d\partial_X +Y^d\partial_Y + Z^d\partial_Z.$$  In affine coordinates we can define $\F_d$ by the 1-form 
$$\left(y-y^d\right)dx-\left(x-x^d\right)dy$$ or even  by the rational first integral 
$$\frac{x^{1-d}-1}{y^{1-d}-1}.$$ This foliation has an automorphism group of order $6(d-1)^2$, generated by the transformations
\begin{align*}
&[X:Y:Z]\mapsto[Y:Z:X],\\  
&[X:Y:Z]\mapsto [X:Z:Y],\\ 
 &[X:Y:Z]\mapsto[\lambda X:Y:Z] \textrm{ and }\\ 
 &[X:Y:Z]\mapsto[X:\lambda Y:Z],
 \end{align*}  where $\lambda$ is a primitive $(d-1)$th root of unity.
\subsection{The Foliation $\mathcal{G}_d$} In affine coordinates, 
this degree $d$ foliation  is defined
by the rational first integral
 \[
 \frac{\left(x^{d-1}+y^{d-1}+1\right)^3}{x^{d-1}y^{d-1}}
\]
or by  the  the 1-form 
$$\left(y+ y^d-2 x^{d-1}y\right)dx
 +\left(x+x^d-2xy^{d-1}\right)dy.$$ 
This foliation has the same group of automorphism as the Fermat foliation $\F_d$. 
For $d\geq 5$, the foliation $\G_d$ is birational to a nonisotrivial hyperbolic fibration and is therefore of general type.

\subsection{The Hessian Foliations $\mathcal{H}_4$ and $\mathcal{H}_7$}\label{hesse}

 The foliation $\Hf_4$ is the degree $4$ foliation given by the well known Hesse pencil of cubics. This foliation has the rational first integral 
\[
[X:Y:Z]\mapsto [X^3+Y^3+Z^3: XYZ].
\]
On the other hand, the foliation $\mathcal{H}_7$ -- see \cite{MP} -- is the degree $7$ foliation given in affine coordinates by the vector field
\begin{align*}
 (x^3-1)(x^3+7y^3+1)x\partial_x+(y^3-1)(y^3+7x^3+1)y\partial_y.
\end{align*}
This foliation leaves invariant the extended Hessian arrangement of $21$ lines on the plane and it is tangent to a pencil of curves of degree $72$.  The automorphism group of both foliations $\mathcal{H}_4$ and $\mathcal{H}_7$ is  the classical Hessian Group -- see \cite{BLI} -- of order $216$, generated by the maps \begin{align*} &[X:Y:Z]\mapsto[Y:Z:X],\\ &[X:Y:Z]\mapsto[X:Y:\lambda Z],\\
&[X:Y:Z]\mapsto[X:\lambda Y: \lambda^2 Z] \textrm{ and }\\ &[X:Y:Z]\mapsto[X+Y+Z:X+\lambda Y+ \lambda^2 Z: X+\lambda^2 Y+\lambda Z],\end{align*}
where $\lambda$ is a primitive cubic root of unity. 
 See  \cite{MP} for more details about the foliations $\mathcal{H}_4$ and $\mathcal{H}_7$. 
\subsection{The Rational Fibration $\mathcal{S}$}
The foliation $\mathcal{S}$ is the foliation of degree $2$ defined by the vector field
$$YZ\partial_X + ZX\partial_Y + XY\partial_Z.$$ In affine coordinates $\mathcal{S}$ is defined by the 1-form 
$$\left(-x+xy^2\right)dx+\left(y-x^2y\right)dy$$ or by the rational first integral 
$$\frac{1-x^2}{1-y^2}.$$ This foliation has the same group -- of order $24$ -- as the Fermat foliation $\F_3$. 
\subsection{The Bernoulli Foliations $\mathcal{P}_5$ and $\mathcal{P}_{11}$}
The degree $5$ foliation $\mathcal{P}_5$ and the degree $11$ foliation $\mathcal{P}_{11}$  are defined  by the 1-forms
\begin{align*}
xdy-ydx+d\big(x^5y-xy^5\big)\end{align*}
and
	\begin{align*}
	&xdy-ydx+d\big(x^{11}y+11x^6y^6-xy^{11}\big),
	\end{align*} 
	respectively. In the affine coordinates $(x,y)$ above,
$\aut(\mathcal{P}_5)$ is  a subgroup of order $96$ of $\operatorname{GL}(2,\mathbb{C})$. The center of this group is  $\{\lambda I\colon \lambda^4=1\}$ and its image  in $\operatorname{PGL}(2,\mathbb{C})$ is   the octahedral group presented in such way its smallest orbit is given by the 6 lines $$x^5y-xy^5=0.$$  
On the other hand, the group $\aut(\mathcal{P}_{11})$ is the subgroup of order $600$ of 
$\operatorname{GL}(2,\mathbb{C})$ having $\{\lambda I\colon \lambda^{10}=1\}$ as its center and whose image in $\operatorname{PGL}(2,\mathbb{C})$ is  the icosahedral group presented in such way  its smallest orbit is given by the 12 lines $$x^{11}y+11x^6y^6-xy^{11}=0.$$ 
The foliations  $\mathcal{P}_5$ and $\mathcal{P}_{11}$ have no rational first integrals but they are particularly special: since both foliations are presented in  the form  \begin{equation}\label{bu}xdy-ydx+dP=0,\end{equation}  
where $P$ is a square free homogeneous polynomial of degree $\ge 4$, they have nondegenerate singularities and both foliations are of general type. Moreover, if we do the change of coordinates $(x,y)=(\frac{t}{z},\frac{1}{z})$ in Equation \eqref{bu} we 
 obtain the Bernoulli equation
\[
nP(t,1)\frac{dz}{dt} = {P_x}(t,1) z-z^{n-1},
\]
so  $\mathcal{P}_5$ and $\mathcal{P}_{11}$ have Liouvillian first integrals -- see \cite{CS}.

\section{Finite subgroups of $\pgl(3,\C)$ and organization of results}\label{class}
In this section, we present the classification of finite subgroups of $\pgl(3,\C)$  given by Blichfeldt in \cite{BLI} -- another great exposition is given by Yu and Yau in \cite{gor}. For each type of group of this classification we state a specific result about automorphisms of foliations;
thus \begin{itemize} \item in all this section $\F$ denote a degree $d$
 holomorphic foliation with isolated singularities on $\mathbb{P}^2$  such that
  $\aut(\F)$ is finite.\end{itemize} It will be easy to see that theorems \ref{f} and 
  \ref{thmp} are  direct consequences of the results presented in this section, so we leave this verification to the reader. 

 In \cite[Chapter XII]{BLI}, the finite subgroups of $\pgl(3,\C)$ are classified as follows. There are two kinds of intransitive groups: abelian groups generated by diagonal matrices and finite non-abelian subgroups of ${\rm GL}(2,\C)$, they are described in subsections \ref{ag} and \ref{gg}. The transitive groups are classified in two classes: the infinite class of imprimitive groups -- which we describe in  Subsection \ref{tig} -- and  the finite class of primitive groups  which we describe in subsections \ref{tpg} and \ref{tpsg}.

\subsection{Abelian groups of diagonal matrices}\label{ag}
A group in this class is such that, in suitable homogeneous coordinates, all its elements are of the form
$$[X:Y:Z]\mapsto [aX:bY:Z],$$
where $a,b\in\mathbb{C}^*$. About the foliations having such a group of automorphisms, we have the following result which is a corollary of Proposition \ref{pro1}.
\begin{proposition}Suppose that $\aut (\mathcal{F})$ is a finite abelian group of diagonal matrices. Then $$|\aut(\F)|\le d^2+d+1$$
\end{proposition}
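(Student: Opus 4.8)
The plan is to translate semi-invariance into a statement about a character lattice and then bound $|G|$, where $G:=\aut(\F)$, by an extremal lattice-index computation. First I would fix a homogeneous vector field $v=A\partial_X+B\partial_Y+C\partial_Z$ of degree $d$ defining $\F$ with $\dv(v)=0$; as noted in the preliminaries such a representative exists, and since adding a radial term $P R$ with $R=X\partial_X+Y\partial_Y+Z\partial_Z$ changes the divergence by $(d+2)P$, the divergence-free representative is unique up to scalar. Consequently, for every $\varphi\in G$ one has genuine semi-invariance $\varphi_* v=\chi(\varphi)\,v$ with $\chi$ a character. Writing each element of $G$ as $\mathrm{diag}(s,t,1)$, a direct computation shows that the monomial field $X^iY^jZ^k\partial_X$ is an eigenvector of $\varphi_*$ with eigenvalue $s^{1-i}t^{-j}$, while those of $X^iY^jZ^k\partial_Y$ and $X^iY^jZ^k\partial_Z$ are $s^{-i}t^{1-j}$ and $s^{-i}t^{-j}$. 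Thus to each monomial field $m$ I attach a weight $\chi_m\in\Z^2$ (the exponent vector of its eigenvalue), and semi-invariance forces all monomials occurring in $v$ to share the same weight modulo the annihilator lattice $M_G=\{\chi\in\Z^2:\chi|_G\equiv 1\}$, which satisfies $[\Z^2:M_G]=|G|$.

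Next I would record two consequences. Let $\Lambda_v\subseteq M_G$ be the sublattice generated by all differences $\chi_m-\chi_{m'}$ of weights of monomials appearing in $v$. If $\Lambda_v$ had rank $\le 1$, its annihilator in the diagonal torus would be a positive-dimensional subtorus on which every $\chi_m-\chi_{m'}$ is trivial; such a subtorus scales $v$ and therefore projects into $\aut(\F)$, contradicting $|\aut(\F)|<\infty$. Hence $\Lambda_v$ has rank $2$, so $v$ has at least three monomials $m_1,m_2,m_3$ for which $\chi_{m_1}-\chi_{m_3}$ and $\chi_{m_2}-\chi_{m_3}$ are independent. Setting $\Lambda'=\langle\chi_{m_1}-\chi_{m_3},\,\chi_{m_2}-\chi_{m_3}\rangle\subseteq\Lambda_v\subseteq M_G$, the tower of finite-index sublattices gives $|G|=[\Z^2:M_G]\le[\Z^2:\Lambda']=|\det(\chi_{m_1}-\chi_{m_3},\,\chi_{m_2}-\chi_{m_3})|$.

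It then remains to prove the purely combinatorial inequality that, for any three degree-$d$ monomial vector fields, the absolute value of this determinant is at most $d^2+d+1$. I would argue that, for each fixed choice of the three directions among $\partial_X,\partial_Y,\partial_Z$, the weight of a monomial is affine in its exponent pair $(i,j)$, and the determinant is affine in each such pair separately; hence its extrema over the triangle $\{i,j\ge 0,\ i+j\le d\}$ are attained at the vertices $(i,j)\in\{(0,0),(d,0),(0,d)\}$, i.e.\ at the nine ``pure'' monomials $X^d\partial_W,Y^d\partial_W,Z^d\partial_W$. The problem reduces to maximizing twice the lattice area of a triangle whose vertices lie among the nine corresponding weights, and a finite check shows the maximum equals $d^2+d+1$, attained exactly by the Jouanolou configuration $\{Y^d\partial_X,\ Z^d\partial_Y,\ X^d\partial_Z\}$. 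Combining this with the previous paragraph yields $|\aut(\F)|\le d^2+d+1$.

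The main obstacle is this last extremal computation: one must verify that no triple of monomials beats the Jouanolou weights, and in particular rule out triples lying in a single $\partial_W$-direction or in only two directions (where the foliation need not have a monomial in each slot). The reduction to the nine vertex-weights makes the verification finite, and the $S_3$-symmetry permuting the coordinates cuts the casework down further. It is reassuring that the value $d^2+d+1$ appearing here is precisely the one realized by the diagonal cyclic subgroup $\Z/(d^2+d+1)\Z$ of the Jouanolou automorphism group, so the bound is sharp within this class.
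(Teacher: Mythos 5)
Your proof is correct, but it follows a genuinely different route from the paper. The paper never leaves the affine chart: it takes the defining $1$-form $\omega=Adx+Bdy$, attaches to it the monomial set $\mathcal{M}$ and the set $S$ of reduced binomial differences, identifies the diagonal stabilizer $G$ with $V(S)\cap(\C^*)^2$ (Lemma \ref{calculo}), and bounds $|V(S)|$ by an inductive B\'ezout-type estimate (Lemma \ref{bezout}) combined with a case analysis on whether the line at infinity is $\F$-invariant; this also pins down the extremal form $\omega_{\alpha,\beta,\rho}$, which is then reused in the proof of Proposition \ref{tig1}. You instead work with the divergence-free homogeneous field on $\C^3$ and dualize: $G$ sits in the torus $\{\operatorname{diag}(s,t,1)\}$, its annihilator $M_G\subset\Z^2$ has index $|G|$, semi-invariance forces all weight differences of monomials of $v$ into $M_G$, finiteness of $\aut(\F)$ forces the difference lattice to have rank $2$, and $|G|$ is bounded by the index of a rank-$2$ sublattice, i.e.\ by a $2\times2$ determinant of weight differences. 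All of these steps check out (in particular $\dv(\varphi_*v)=\dv(v)\circ\varphi^{-1}$ makes the divergence-free representative genuinely semi-invariant, and the differences $\chi_m-\chi_{m_3}$ with a common base point do span the full difference lattice). The remaining extremal step is only asserted, but your reduction is sound --- the determinant is affine in each exponent pair, so the maximum over the product of exponent triangles is attained at the nine pure weights, whose convex hull is the hexagon with vertices $(1,0),(1,-d),(0,-d),(-d,0),(-d,1),(0,1)$ --- and the finite check does give $d^2+d+1$, attained exactly by the two Jouanolou-type triples $\{Y^d\partial_X,Z^d\partial_Y,X^d\partial_Z\}$ and $\{Z^d\partial_X,X^d\partial_Y,Y^d\partial_Z\}$ (determinants $\pm(d^2+d+1)$). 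What each approach buys: yours is coordinate-symmetric, replaces the B\'ezout inequality by an exact lattice index, avoids the case split at infinity, and exposes the Jouanolou configuration as the unique extremal triple; the paper's affine machinery is less sharp pointwise but is recycled wholesale in Sections \ref{sec2} and \ref{stig} (Propositions \ref{dg}, \ref{tig1}, \ref{nd} and Lemma \ref{aux}), where the explicit description of $S$ and of the extremal $1$-forms is what actually drives the classification.
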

\subsection{Finite non abelian subgroups of $\operatorname{GL}(2,\mathbb{C})$}\label{gg}
In suitable homogeneous coordinates, a group in this class is composed by transformations of the form 
$$[X:Y:Z]\mapsto \left[aX+bY:cX+dY:Z\right],$$ where $\begin{bmatrix}a&b\\c&d\end{bmatrix}$ 
 belongs to a finite non abelian subgroup $G$ of $\operatorname{GL}(2,\mathbb{C})$.
 Recall that the image of $G$ in $\operatorname{PGL}(2,\mathbb{C})$, being finite, is one of the following groups:
 \begin{enumerate}
	\item A dihedral group.
	\item A polyhedral group, so we have three possibilities: 
	\begin{enumerate}\item The Tetrahedral Group $T$, isomorphic to the alternating group ${A}_4$;
	\item The Octahedral Group $O$, isomorphic to the symmetric group ${S}_4$;
	\item The Icosahedral Group $I$, isomorphic to the alternating group ${A}_5$.
	\end{enumerate}
\end{enumerate}
In relation with the dihedral case, in Section \ref{sec2} we prove the following result.   
\begin{proposition} \label{dg}Suppose  that the image of $\aut (\mathcal{F})$ in $\operatorname{PGL}(2,\mathbb{C})$ is a dihedral group. 
Then $$|\aut(\F)|\le 2(d^2+d+1).$$
\end{proposition}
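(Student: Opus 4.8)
The plan is to exploit the fact that a dihedral group contains a cyclic subgroup of index two, and to bound the corresponding subgroup of $\aut(\F)$ by means of the diagonal case already treated in Proposition \ref{pro1}. First I would fix homogeneous coordinates realizing the normal form of Subsection \ref{gg}: every element of $G:=\aut(\F)$ has the form $[X:Y:Z]\mapsto[aX+bY:cX+dY:Z]$, so that $G$ is identified with a finite subgroup of $\gl(2,\C)$ acting on the invariant line $\{Z=0\}\cong\PP^1$. Let $\rho\colon G\to\pgl(2,\C)$ be the induced homomorphism. By hypothesis its image $\overline{G}=\rho(G)$ is a dihedral group $D_n$, and the kernel of $\rho$ is the subgroup $C\subset G$ of scalar matrices, which is cyclic and central.

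Next I would single out the cyclic rotation subgroup $R\cong\Z/n\Z\subset D_n$, which has index two in $\overline{G}$, and set $H:=\rho^{-1}(R)$. Since $\rho$ is surjective onto $\overline{G}$, the subgroup $H$ has index two in $G$, whence $|G|=2|H|$. The key observation is that $H$ is abelian: it fits into a central extension $1\to C\to H\to R\to 1$ with $R$ cyclic, and a central extension of a cyclic group is always abelian (any two elements are of the form $g^{k}c$ with $c\in C$ central, hence commute). Being a finite abelian subgroup of $\gl(2,\C)$, the group $H$ is simultaneously diagonalizable; after a linear change of the coordinates $X,Y$ --- which fixes both the point $[0:0:1]$ and the line $\{Z=0\}$ --- every element of $H$ becomes a diagonal transformation $[X:Y:Z]\mapsto[aX:bY:Z]$. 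Thus $H$ is a finite abelian group of diagonal automorphisms of $\F$ in the sense of Subsection \ref{ag}.

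Finally I would apply Proposition \ref{pro1} to the diagonal group $H$ to get $|H|\le d^2+d+1$, and conclude
\[
|\aut(\F)|=|G|=2|H|\le 2\left(d^2+d+1\right),
\]
as claimed. I expect the main obstacle to be the careful invocation of the diagonal bound: Proposition \ref{pro1} must be available for an \emph{arbitrary} finite abelian group of diagonal automorphisms of $\F$, and not merely for the full automorphism group, so that it legitimately applies to the proper subgroup $H$; this is precisely the generality in which I would phrase it. A minor point to verify separately is the degenerate behaviour for small $n$ (for instance $D_1\cong\Z/2\Z$, or the abelian group $D_2$), where the rotation subgroup is trivial or of order two; in these cases $H=C$ or $H$ is even smaller, and the same index-two counting together with the diagonal bound yields the inequality a fortiori.
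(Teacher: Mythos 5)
Your proof is correct and follows essentially the same route as the paper: both pass to the index-two preimage of the cyclic rotation subgroup of the dihedral group, diagonalize it in suitable coordinates, and apply Proposition \ref{pro1} to that diagonal subgroup to get $|\aut(\F)|\le 2(d^2+d+1)$. Your central-extension argument for why this preimage is abelian (hence simultaneously diagonalizable) is a slightly more detailed justification of the step the paper dispatches by noting that the rotation subgroup fixes two points of $\PP^1$, but the substance is identical.
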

For the polyhedral case we have the following result, proved in Section \ref{poly}.
\begin{proposition}\label{polyhedral} Suppose  that the image of $\aut (\mathcal{F})$ in $\operatorname{PGL}(2,\mathbb{C})$ is a polyhedral group. Then
 $$|\aut(\F)|\ge 3(d^2+d+1)$$ if and only if $\F$ is equivalent to one of the Bernoulli foliations $\mathcal{P}_5$ or $\mathcal{P}_{11}$.
\end{proposition}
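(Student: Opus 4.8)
The plan is to translate the problem into the invariant theory of binary forms. After conjugating, I may assume that $\aut(\F)$ is a finite group $G\subset\gl(2,\C)$ acting on the affine chart $\C^2=\{(x,y)\}$ by $(x,y)\mapsto(ax+by,cx+dy)$, with projectivization $\overline{G}\subset\pgl(2,\C)$ equal to $T$, $O$ or $I$. Writing $Z=G\cap\C^*\mathrm{Id}$ for the cyclic group of scalar matrices in $G$ and $c=|Z|$, one has $|\aut(\F)|=|G|=|\overline{G}|\,c$ with $|\overline{G}|\in\{12,24,60\}$. Thus the whole question becomes: for which degree $d$ and which $\overline{G}$ can the scalar order $c$ be large enough that $|\overline{G}|\,c\ge 3(d^2+d+1)$, and what are the corresponding foliations.

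First I would analyze the defining $1$-form. Let $\omega=A\,dx+B\,dy$ be a reduced polynomial $1$-form for $\F$ (so $\gcd(A,B)=1$); since $\F$ has isolated singularities, $G$-invariance means $g^*\omega=\chi(g)\,\omega$ for a character $\chi\colon G\to\C^*$. Splitting $\omega=\sum_m\omega_m$ into homogeneous parts, each $\omega_m$ is again $\chi$-semi-invariant, and testing on $Z$ shows that all degrees $m$ occurring in $\omega$ lie in a single residue class modulo $c$; in particular any two parts differ in degree by a multiple of $c$. Finiteness of $\aut(\F)$ forbids a homogeneous $\omega$ (that would be semi-invariant under all homotheties), so at least two parts occur. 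The key tool is the Euler identity $L_R\omega_m=(m+1)\omega_m$ with $R=x\partial_x+y\partial_y$ and $L_R=d\,i_R+i_R\,d$: since $R$ is $G$-invariant this yields $\omega_m=\tfrac1{m+1}\bigl(d(i_R\omega_m)+J\,(x\,dy-y\,dx)\bigr)$, where $i_R\omega_m$ and $J$ are relative invariant binary forms of degrees $m+1$ and $m-1$ with characters $\chi$ and $\chi\det^{-1}$. Hence a nonzero $\omega_m$ forces a nonzero relative invariant of degree $m+1$ or $m-1$.

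Next I would extract the numerology. Because $\overline{G}$ acts irreducibly its smallest orbit on $\PP^1$ has $4$, $6$ or $12$ points, so there is no relative invariant of degree $1$, $2$ or $3$, while the constant $J$ produces the unique low-degree $1$-form $\omega_1\propto x\,dy-y\,dx$, of character $\det$. Writing $k$ for the top degree of $\omega$, so that $d=k$ or $d=k-1$, the existence of a second part at degree $\le k-c$ forces $c\le k-1\le d$; and a short analysis shows $c=d$ can only occur when both surviving parts are multiples of $x\,dy-y\,dx$, i.e. $\F$ is radial, which is excluded. Hence $c\le d-1$ and $|G|\le|\overline{G}|(d-1)$. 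This immediately kills the tetrahedral case, since $12(d-1)<3(d^2+d+1)$ for all $d$, and confines the octahedral case to $d\le 5$ and the icosahedral case to $d\le 17$.

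Finally I would invoke the classical relative invariants of the binary polyhedral groups. In the ranges above the only relative invariants of the needed degrees are the smallest ground forms — the octahedral sextic (degree $6$) and the icosahedral form $f$ (degree $12$) — together with constants; their characters agree with $\det$ on $Z$ (their weights $6$ and $12$ under $\lambda\,\mathrm{Id}$ equal $\lambda^2=\det(\lambda\,\mathrm{Id})$ on $Z$), and this is upgraded to equality on all of $G$ (trivial abelianization for $\overline{I}=A_5$, and direct comparison with the explicit model for $\overline{O}$). Listing the resulting low-degree semi-invariant $1$-forms shows that the only way to assemble at least two parts of a common character is the pair of degrees $(1,d)$ with $i_R\omega_d$ equal to such a ground form, which is possible exactly for $d+1\in\{6,12\}$, i.e. $d=5$ (octahedral) and $d=11$ (icosahedral); this gives $\omega=c_1(x\,dy-y\,dx)+dP$, namely $\mathcal{P}_5$ and $\mathcal{P}_{11}$. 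The converse inequalities $96\ge 3\cdot 31$ and $600\ge 3\cdot 133$ are the computations recorded in Section~\ref{examples}. I expect the main obstacle to be precisely the invariant-theoretic bookkeeping of this last step: determining the degrees and, crucially, the \emph{characters} of the relative invariants so as to decide which homogeneous parts can coexist in a single semi-invariant $1$-form, and ruling out the parasitic possibilities (the collapse $c=d$, the transverse case $d=k-1$, and the small degrees admitting no invariant at all).
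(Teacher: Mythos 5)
Your proposal follows essentially the same route as the paper's proof: decompose the semi-invariant $1$-form into homogeneous semi-invariant parts, write each part as $dP+Q(x\,dy-y\,dx)$ via the Euler relation, bound the scalar subgroup by the gaps between the occurring degrees, and use the minimal orbit sizes of $T$, $O$, $I$ on $\PP^1$ to force $\omega=\alpha(x\,dy-y\,dx)+\beta\,dP$ with $P$ the degree-$6$ or degree-$12$ ground form, yielding $\mathcal{P}_5$ and $\mathcal{P}_{11}$. The only (harmless) deviations are your slightly sharper bound $c\le d-1$, which disposes of the tetrahedral case without the separate $d=2$ orbit argument the paper uses, and some character bookkeeping that is superfluous for the "only if" direction since the degree and gap constraints already pin down the form.
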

\subsection{Transitive imprimitive groups}\label{tig}
 In this case we have two types of groups. The first type is composed by those groups such that, in suitable homogeneous coordinates, they are generated 
 by the transformation 
 $$ [X:Y:Z]\mapsto [Y:Z:X]$$ and a group of transformations of the
  form $$[X:Y:Z]\mapsto [aX:bY:Z].$$ 
 \begin{proposition} \label{tig1}Suppose  that $\aut (\mathcal{F})$ is a group of the type above.  Then
 $$|\aut(\F)|\le 3(d^2+d+1)$$ and the equality holds if and only if 
$\mathcal{F}$ is isomorphic to the Jouanolou foliation $\mathcal{J}_d$. 
\end{proposition}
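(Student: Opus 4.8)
The plan is to write the group in the form $G:=\aut(\F)=H\rtimes\langle\sigma\rangle$, where $\sigma\colon[X:Y:Z]\mapsto[Y:Z:X]$ has order $3$ and $H$ is the finite abelian group of diagonal maps $[X:Y:Z]\mapsto[aX:bY:Z]$ that it normalizes. Then $|G|=3|H|$, so the bound $|\aut(\F)|\le 3(d^2+d+1)$ is equivalent to $|H|\le d^2+d+1$, and it only remains to match the equality case with $\mathcal{J}_d$. The engine of the proof is the classical count $\#\sing(\F)=d^2+d+1$ (with multiplicity) for a degree $d$ foliation with isolated singularities, together with the fact that $G$ permutes $\sing(\F)$ preserving the multiplicity $\mu_p$ of each singular point, which gives the orbit identity $d^2+d+1=\sum_{\mathcal{O}}|\mathcal{O}|\,\mu_{\mathcal{O}}$.

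A nontrivial diagonal element fixes only points lying on the three coordinate axes $\{XYZ=0\}$; hence any singular point $p$ in the open torus $\{XYZ\neq0\}$ has trivial $H$-stabilizer, so $H\cdot p$ consists of $|H|$ distinct singular points and $|H|\le|H|\,\mu_p\le d^2+d+1$. First I would therefore show that $\F$ always has a singularity off the coordinate axes. A singular point on a coordinate line, say $Z=0$, forces the $\partial_Z$-component $C$ of a defining vector field to vanish there, so the singularities on a noninvariant line lie among the $\le d$ zeros of $C(X,Y,0)$, while an invariant line carries $d+1$ of them; the three lines, sharing the three vertices, therefore cannot support a free $H$-orbit. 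In the degenerate situation where \emph{every} singular point lies on an axis, each stabilizer is one of the three subgroups $K_\bullet=H\cap\{\text{scalar on two coordinates}\}$ acting trivially on a coordinate line, and one must bound $|H|$ through the refined identity $\sum_{\mathcal{O}}|\mathcal{O}|\,\mu_{\mathcal{O}}=d^2+d+1$ using the per-line multiplicities; this bookkeeping, especially for small $d$, is the first delicate point, and the outcome is that this configuration cannot attain the extremal value.

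For the equality case I would use that $|H|=d^2+d+1$ together with the free torus orbit: that single orbit already has $d^2+d+1$ points, so it is all of $\sing(\F)$, every singularity is reduced and lies in the open torus, and $\sigma$ cyclically permutes the orbit. Passing to weights, $H$-semi-invariance of a divergence-free defining field $v=A\partial_X+B\partial_Y+C\partial_Z$ forces the exponent vectors $(i,j)$ of the monomials of $A,B,C$ to lie in three fixed cosets of the weight lattice $\Lambda=\{(m,n)\in\Z^2: a^mb^n=1 \text{ for all } (a,b)\in H\}$, which has index $[\Z^2:\Lambda]=|H|=d^2+d+1$ and is stable under the order-three automorphism induced by $\sigma$ (so $\Lambda$ is an Eisenstein ideal of a fixed shape). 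Because this index is comparable to the number of admissible monomials in the triangle $T_d=\{i,j\ge0,\ i+j\le d\}$, a lattice-point count shows each coset meets $T_d$ in at most one usable monomial after the radial reduction; $\sigma$-equivariance then cycles these three monomials and nondegeneracy excludes the degenerate choices, leaving exactly $Y^d\partial_X+Z^d\partial_Y+X^d\partial_Z$ up to the diagonal and permutation changes of coordinate lying in the normalizer. Hence $\F$ is projectively equivalent to $\mathcal{J}_d$.

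The hard part will be the equality classification: turning the sparse-monomial heuristic into a rigorous statement requires the precise lattice-point estimate for $\sigma$-stable index-$(d^2+d+1)$ sublattices of $\Z^2$, together with a verification that any competing monomial pattern fails to produce isolated singularities—equivalently, that the Eisenstein ideal realizing $\mathcal{J}_d$ is the only one compatible with a genuine degree $d$ foliation. The second genuine obstacle is the degenerate case of all singularities on the coordinate axes, where the clean free-orbit bound is unavailable and one must argue through multiplicities. I expect these two points, rather than the main orbit estimate, to absorb most of the work.
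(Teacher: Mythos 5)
Your reduction to $|H|\le d^2+d+1$ via $|\aut(\F)|=3|H|$ matches the paper, but from there you take a genuinely different route: the paper does not count singularities at all. It bounds the diagonal subgroup directly as the point set $V(S)$ of a system of binomial equations extracted from the monomials of a defining $1$-form, and controls $|V(S)|$ by an inductive B\'ezout argument (Proposition \ref{pro1} and Lemma \ref{bezout}); equality then forces the monomial set to be exactly $\{x^{d+1}y,\,x,\,y^{d+1}\}$, after which invariance under $[x:y:z]\mapsto[y:z:x]$ pins down the coefficients and a diagonal change of coordinates yields $\mathcal{J}_d$. Your orbit-counting idea is attractive, but as written it has two genuine gaps, both of which you flag yourself without resolving.

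First, the inequality itself is only established under the hypothesis that $\sing(\F)$ meets the open torus $\{XYZ\neq0\}$: if every singular point lies on the coordinate triangle (which is not a priori excluded --- the total Milnor number $d^2+d+1$ can be concentrated at few, highly degenerate points on the axes), the free-orbit bound is unavailable, and you would need to prove $|H|\le d^2+d+1$, not merely that equality fails, in that regime. No argument is supplied. Second, the equality classification hinges on the assertion that each coset of the weight lattice $\Lambda$ (of index $d^2+d+1$) meets the exponent triangle $T_d$ in at most one usable monomial. Since $|T_d\cap\Z^2|=(d+1)(d+2)/2$, the \emph{average} number of triangle points per coset is about $1/2$, but an averaging bound does not rule out a coset containing two or more points; moreover, for a given $d$ there may be several $\sigma$-stable sublattices of index $d^2+d+1$ (the corresponding Eisenstein ideals need not be unique), and you would have to eliminate all but the one realizing $\mathcal{J}_d$. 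This is precisely the combinatorial elimination the paper carries out, in affine form, inside the proof of Proposition \ref{pro1}, so the "heuristic" you defer is in fact the core of the argument rather than a finishing detail.
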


The second type of transitive imprimitive groups  is composed by those groups such that,  in suitable homogeneous coordinates, they are generated by 
$$[X:Y:Z]\mapsto [Y:Z:X],$$ a transformation  of the form $$[X:Y:Z]\mapsto [\mu Y:\nu X:Z],$$ where $\mu,\nu\in\mathbb{C}^*$, and a group of transformations of the form $$[X:Y:Z]\mapsto [aX:bY:Z].$$ 

\begin{proposition}\label{nd}  Suppose  that $\aut (\mathcal{F})$ is a group of the type above.

\begin{enumerate}
\item Assume that the line at infinity $Z=0$ is invariant by $\mathcal{F}$. Then 
$$|\aut(\F)|\le 6(d-1)^2$$  and the equality holds if and only if  $\mathcal{F}$ is isomorphic to $\F_d$ or  $\G_d$.
Moreover, if $\mathcal{F}$ is different from $\F_d$ and $\G_d$ we have
$$|\aut(\F)|< 3(d^2+d+1).$$
\item If the line at infinity  is generically transverse to $\mathcal{F}$, then 
  $$|\aut(\F)|\ge 3(d^2+d+1)$$ if and only if $\F$ is equivalent to the foliation $\mathcal{S}$. 
 \end{enumerate}
\end{proposition}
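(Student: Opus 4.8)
The plan is to reduce the statement to a single bound on the diagonal part of $\aut(\F)$, together with an extremal classification. Write $G=\aut(\F)$. The diagonal transformations $[X:Y:Z]\mapsto[aX:bY:Z]$ lying in $G$ form a normal abelian subgroup $A$ — normal because $\sigma\colon[X:Y:Z]\mapsto[Y:Z:X]$ and $\tau\colon[X:Y:Z]\mapsto[\mu Y:\nu X:Z]$ are monomial and hence conjugate diagonal matrices to diagonal matrices — and the quotient $G/A$ is generated by the images of $\sigma$ and $\tau$, so it equals $S_3$. Thus $|G|=6|A|$, and everything reduces to controlling $|A|$. Since $\sigma$ cyclically permutes the three coordinate lines $\{X=0\},\{Y=0\},\{Z=0\}$, and an automorphism sends invariant (resp.\ generically transverse) lines to invariant (resp.\ transverse) lines, the three coordinate lines are \emph{simultaneously} invariant or transverse; this is precisely the dichotomy (1)/(2) in the statement.

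Next I would run a weight (diagonalization) argument. Writing $\F$ by a homogeneous $1$-form $\Omega$, the condition that a diagonal $g=\mathrm{diag}(\alpha,\beta,1)$ be a symmetry is that every monomial of $\Omega$ lie in one and the same character of $A$. Encoding each monomial by an ``effective weight'' $(p,q)\in\Z^2$, this says that the weight-support $E$ of $\Omega$ lies in a single coset of the sublattice $M\subseteq\Z^2$ generated by the pairwise differences of weights, whence $|A|=[\Z^2:M]$; imposing in addition semi-invariance under $\sigma,\tau$ forces $E$ to be invariant under the induced $S_3$-action on $\Z^2$. In case (1), invariance of the three lines yields the normal form $\Omega=YZ\,a\,dX+ZX\,b\,dY+XY\,c\,dZ$ with $a+b+c=0$ and $\deg a=\deg b=\deg c=d-1$, so that $E$ sits inside the triangle $R=\{(p,q):p,q\ge1,\ p+q\le d+1\}$. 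Bounding $[\Z^2:M]$ by twice the area of $R$ then gives $|A|\le(d-1)^2$, i.e.\ $|G|\le6(d-1)^2$, with equality forcing $E$ to be exactly the corner set $\{X^{d-1},Y^{d-1},Z^{d-1}\}$.

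To identify the extremal foliations I would solve the resulting small linear system. When $E$ is the corner set, $a,b,c$ are combinations of $X^{d-1},Y^{d-1},Z^{d-1}$; the $\sigma$-relations (with $\lambda^3=1$ the cyclic factor) reduce, for $\lambda=1$, to the single constraint $a_1+a_2+a_3=0$, a pencil of foliations, and imposing the $\tau$-relation pins $(a_1,a_2,a_3)$ down to $(0,1,-1)$ or $(-2,1,1)$ up to permutation — these being exactly $\F_d$ and $\G_d$; the cases $\lambda\neq1$ and the non-saturated members are handled separately. For the ``moreover'' clause the decisive ingredient is saturation: the defining $\Omega$ must have no common factor, which after the $YZ,ZX,XY$ factorization is equivalent to $E$ meeting all three edges of $R$. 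The corner orbit is the unique minimal such configuration, and enlarging it by any further $S_3$-orbit introduces a short vector in $M$ — e.g.\ the difference of an edge point $(1,q)$ with the corner $(1,1)$ is $(0,q-1)$, whose greatest common divisor with the corner edge $(0,d-1)$ is at most $(d-1)/2$ — forcing $[\Z^2:M]\le(d-1)^2/2<\tfrac12(d^2+d+1)$ and hence $|G|<3(d^2+d+1)$.

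For part (2) I would apply the same diagonalization, now with all three lines transverse: this removes the pure powers from $\Omega$ and places $E$ in the complementary region, and the extra reflection $\tau$ caps the attainable index below $\tfrac12(d^2+d+1)$ for every $d\ge3$, so that $|G|<3(d^2+d+1)$ there; the borderline degree $d=2$ is finite, and I would check directly that the only transverse imprimitive foliation with $|G|\ge 3\cdot7=21$ is $\mathcal{S}$, for which $|A|=4$ and $|G|=24$. I expect the main obstacle to be the extremal and near-extremal analysis rather than the bounds themselves: proving that the corner pencil collapses to exactly $\F_d$ and $\G_d$ (controlling all cyclic factors $\lambda$ and the scalars $\mu,\nu$ of $\tau$), and establishing the uniform gap down to $\tfrac12(d^2+d+1)$, where the saturation hypothesis — which is exactly what excludes the spurious high-index configurations supported on interior monomials — does the essential work.
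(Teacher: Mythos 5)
Your reduction to the diagonal subgroup $A$ of index $6$, and the reformulation $|A|=[\Z^2:M]$ in terms of the lattice generated by differences of weights, coincide with the paper's Lemma~\ref{calculo}. Your bound $[\Z^2:M]\le 2\,\mathrm{area}(R)=(d-1)^2$ --- valid because any three non-collinear weights span a sublattice of index twice the area of an inscribed triangle, and finiteness of $A$ guarantees such a triple exists --- is a clean alternative route to the inequality of part (1), with equality correctly forcing the corner support; the paper instead reaches the same dichotomy through GSV indices along $y=0$ and an iterated B\'ezout lemma. The identification of $\F_d$ and $\G_d$ from the corner configuration also matches the paper. The genuine gaps are in the two remaining pieces, which carry most of the content of the proposition.

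First, the ``moreover'' clause. Your premise --- that saturation is equivalent to $E$ meeting all three edges of $R$, that the corner orbit is the unique minimal such configuration, and hence that every non-extremal admissible $E$ contains the corners plus extra points --- is not correct: the support can miss the corners entirely while the form stays saturated and all three coordinate lines invariant. In the paper's affine coordinates, if $B(x,0)=c_kx^k+\cdots+c_lx^l$ with $k\ge 2$, then $(1,1)\notin E$; the paper excludes large groups here not via saturation but via the symmetry $\gsv(\F,L,0)=\gsv(\F,L,\infty)$ (the two points of $L=\{y=0\}$ are swapped by $R\circ T$), which forces $k+l=d+1$, hence $k\le(d+1)/2$ and $|A|\le(k-1)^2\le\left((d-1)/2\right)^2$. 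Moreover your single worked case (an edge point $(1,q)$ adjoined to the three corners) is one of several configurations; extra interior points, pure powers, and corner-free supports each require their own estimate --- this is exactly the case analysis of the paper's Lemma~\ref{aux} together with the separate treatment of $k\neq 1$. Second, part (2): the assertion that transversality ``removes the pure powers'' is backwards. Transversality of $y=0$ means $A(x,0)\neq 0$, so pure powers $x^{k+1}$ and $y^{k+1}$ \emph{do} occur in the support, and it is precisely these that give $|A|\le(k+1)^2$. The missing ingredient is the bound $k\le d/2$, which the paper obtains because $k$ is the tangency order of $\F$ with $L$ at $0$, the tangency orders at $0$ and $\infty$ agree by symmetry, and their sum is at most the total tangency $d$. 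Without this, your claimed cap below $(d^2+d+1)/2$ for all $d\ge 3$ is unsupported and the reduction to the single degree $d=2$ (and thence to $\mathcal{S}$) does not get started.
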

Proposition  \ref{tig1} and Proposition \ref{nd} are proved in Section \ref{stig}.
\subsection{Transitive primitive groups having a non-trivial normal subgroup}\label{tpg}
Up to isomorphism, we have three groups in this class: 
\begin{enumerate}
\item The Hessian Group $G$, already presented in Subsection \ref{hesse}.
\item  The normal subgroup $E\triangleleft G$ generated by the transformations
\begin{align*} &T[X:Y:Z]=[Y:Z:X],\\ 
&S[X:Y:Z]=[X:\lambda Y: \lambda^2 Z],\textrm{ and }\\ &V[X:Y:Z]=[X+Y+Z:X+\lambda Y+ \lambda^2 Z: X+\lambda^2 Y+\lambda Z].\end{align*}
\item The normal subgroup $F\triangleleft G$ generated by the group $E$ and the map $UVU^{-1}$, where
\begin{align*} U[X:Y:Z]=[X:Y:\lambda Z].
\end{align*}
\end{enumerate} 
\begin{proposition} \label{molien1} Suppose  that $\aut (\mathcal{F})$ is a transitive primitive group having a non-trivial normal subgroup.   Then, we have 
 $$|\aut(\F)|\ge 3(d^2+d+1)$$ if and only if $\F$ is equivalent to one of the Hessian foliations $\mathcal{H}_4$ or $\mathcal{H}_7$.  
\end{proposition}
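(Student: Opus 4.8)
The plan is to exploit the Molien-type formula of Theorem~\ref{molienthm} to count, degree by degree, the foliations that admit each of the three groups in this class as symmetries, and then to run through the finitely many degrees allowed by the order bound. Recall that the three groups form a chain $E\subset F\subset G$ with $|E|=36$, $|F|=72$ and $|G|=216$, where $G$ is the Hessian group. Translating the hypothesis $|\aut(\F)|\ge 3(d^2+d+1)$ into a ceiling on $d$ gives: if $\aut(\F)$ is projectively conjugate to $G$ then $d^2+d+1\le 72$, i.e. $d\le 7$; if to $F$ then $d^2+d+1\le 24$, i.e. $d\le 4$; and if to $E$ then $d^2+d+1\le 12$, i.e. $d=2$. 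Since the hypothesis forces $\aut(\F)$ to be conjugate to exactly one of $E,F,G$, this reduces the proposition to a finite list of pairs (group, degree).

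The core step is the Molien computation. A degree-$d$ foliation invariant under a group $\Gamma$ is the same as a nonzero $\Gamma$-semi-invariant homogeneous vector field $v$ of degree $d$ --- one with $\varphi_*v=\chi(\varphi)\,v$ for some character $\chi$ of $\Gamma$ --- considered modulo the radial field, that is, modulo $P\cdot(X\partial_X+Y\partial_Y+Z\partial_Z)$ with $P$ a semi-invariant of character $\chi$. For each $\Gamma\in\{E,F,G\}$ and each character $\chi$, Theorem~\ref{molienthm} expresses the dimension of this quotient as the coefficient of $t^d$ in an explicit rational function in $t$, assembled from the characteristic polynomials $\det(I-t\,g)$ of the elements $g\in\Gamma$ acting on $\C^3$ and already accounting for the passage to the quotient by the radial field. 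Reading off these coefficients over the admissible range, I expect that for $\Gamma=G$ the only degrees in $\{2,\dots,7\}$ carrying a genuine invariant foliation are $d=4$ and $d=7$; that in each of these two cases the relevant semi-invariant space is one-dimensional, so the foliation is unique up to projective equivalence; and hence that it must coincide with the $G$-invariant foliations $\Hf_4$ and $\Hf_7$ described in Subsection~\ref{hesse}. For $d\in\{2,3,5,6\}$ no nonzero vector field survives modulo the radial field, so no $G$-invariant foliation exists.

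The subgroup cases I would dispose of by upgrading the symmetry rather than by producing new foliations. For $\Gamma=F$ with $d\le 4$, and for $\Gamma=E$ with $d=2$, the Molien computation should show that every $\Gamma$-semi-invariant vector field of that degree is in fact invariant under the whole Hessian group $G$; concretely, the $\Gamma$-semi-invariant spaces collapse onto the $G$-semi-invariant ones in these low degrees. Any foliation arising here would therefore satisfy $G\subseteq\aut(\F)$, contradicting $\aut(\F)=F$ or $\aut(\F)=E$, so these two cases are vacuous. Together with the backward direction --- $\Hf_4$ and $\Hf_7$ have automorphism group the Hessian group $G$ of order $216$, and $216\ge 3(4^2+4+1)=63$ while $216\ge 3(7^2+7+1)=171$ --- this yields the claimed equivalence.

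The main obstacle is twofold. First, the Molien bookkeeping must be organized carefully over all characters of $E$, $F$ and $G$, which in practice means tabulating the conjugacy-class data of each group so that the generating functions stay tractable. Second, and more delicate, once a nonzero semi-invariant vector field is produced one must verify that it defines a genuine foliation with isolated singularities and then pin down its automorphism group exactly: here the care goes into discarding degenerate vector fields (those with a nonconstant common factor, or proportional to the radial field) and into confirming that the $d=4$ and $d=7$ solutions are precisely $\Hf_4$ and $\Hf_7$, and not some projectively inequivalent foliation sharing the same Molien count.
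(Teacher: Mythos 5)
Your treatment of the Hessian group itself coincides with the paper's: bound $d\le 7$ from the order inequality, compute the Poincar\'e series of divergence-free semi-invariant vector fields for each of the (three) multiplicative characters of the lift of $G$ to $\gl(3,\C)$, read off that only $d=4$ and $d=7$ carry a one-dimensional space, and identify the resulting foliations with $\Hf_4$ and $\Hf_7$. Where you genuinely diverge is in the subgroup cases $E$ and $F$: you propose to extend the Molien bookkeeping to these groups and show their semi-invariant spaces in degrees $d\le 2$ (resp.\ $d\le 4$) collapse onto those of $G$, whereas the paper avoids any further Molien computation and argues geometrically --- for $E$ it shows that $\sing(\F)$ would have to be a single orbit of $7$ points with Milnor number $1$, impossible since $7\nmid 36$; for $F$ it shows that the $12$ singular points of the singular members of the Hesse pencil form an orbit forced into $\sing(\F)$, that a second orbit of $9$ points must join them, and then invokes the Campillo--Olivares theorem (a foliation with simple singularities is determined by its singular scheme) to conclude $\F=\Hf_4$. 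Your route is more uniform and, since the paper's geometric argument confirms that no new foliations appear for $E$ and $F$, your expected Molien outcomes are correct; but be aware it is computationally heavier than you may anticipate: the lifts of $E$ and $F$ have larger abelianizations than the $648$-element lift of $G$, so you must tabulate and sum over more characters before you can compare dimensions with the $G$-semi-invariant spaces, and the dimension comparison is what actually licenses the ``upgrade the symmetry'' step. The paper's approach buys a short, computation-free disposal of the two subgroups at the price of specific facts about the Hesse configuration and an external uniqueness theorem; yours buys uniformity at the price of two more rounds of character-by-character series expansions that you have asserted but not yet carried out.
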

This result is proved in Section \ref{primitive}.

\subsection{Transitive primitive simple groups}\label{tpsg}
Up to isomorphism, we have three groups in this class: The icosahedral group which is isomorphic to $A_5$,  the image of the Valentiner Group in $\pgl(3,\C)$ which is isomorphic to $A_6$ and the Klenian group isomorphic to $\operatorname{PSL}(2,\mathbb{F}_7)$. In this case we have the following result,  proved in Section \ref{primitive}. 

 \begin{proposition}\label{molien2} Suppose  that $\aut (\mathcal{F})$ is a transitive primitive simple group.   Then 
 $$|\aut(\F)|< 3(d^2+d+1).$$  
\end{proposition}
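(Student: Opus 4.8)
The plan is to combine Blichfeldt's classification with the Molien-type formula of Theorem~\ref{molienthm}. There are exactly three groups to treat, of orders $60$, $168$ and $360$: the icosahedral group $A_5$, the Klein group $\operatorname{PSL}(2,\mathbb{F}_7)$, and the projectivization $A_6$ of the Valentiner group. The key observation is that for each of them $|\aut(\F)|=|G|$ is a fixed constant, while the target $3(d^2+d+1)$ is strictly increasing in $d$; hence it suffices to bound the degree from below. Concretely, if every $G$-invariant foliation has degree $d\ge d_0(G)$ with $d_0(A_5)=4$, $d_0(\operatorname{PSL}(2,\mathbb{F}_7))=7$ and $d_0(A_6)=11$, then $|\aut(\F)|=|G|<3(d_0^2+d_0+1)\le 3(d^2+d+1)$, because $60<63$, $168<171$ and $360<399$.

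To bound the degree I translate the problem into invariant theory. A degree~$d$ foliation invariant under $G$ is the same as a nonzero $G$-semi-invariant section of $T_{\PP^2}\otimes\OO_{\PP^2}(d-1)$, i.e.\ a $G$-semi-invariant homogeneous vector field of degree $d$ on $\C^3$ taken modulo the radial field $R=X\partial_X+Y\partial_Y+Z\partial_Z$. Thus such a foliation exists only if
\[
h(d)\;=\;\dim\big(\operatorname{Sym}^d(V^*)\otimes V\big)^{\tilde G}_{\chi}\;-\;\dim\big(\operatorname{Sym}^{d-1}(V^*)\big)^{\tilde G}_{\chi}\;>\;0,
\]
where $V=\C^3$ carries the relevant representation of the lift $\tilde G\subset\SL(3,\C)$, the character $\chi$ is fixed by the semi-invariance condition, and the subtraction removes the multiples of $R$. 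Since each $G$ is simple and its lift $\tilde G$ is perfect, the only available character is the trivial one. For $A_5$ and $\operatorname{PSL}(2,\mathbb{F}_7)$ the lift sits faithfully in $\SL(3,\C)$ with trivial center, so no further constraint appears; for $A_6$ one must use the triple cover $\tilde G=3\cdot A_6$, whose central $\Z/3\Z$ acts on a degree-$d$ field by $\omega^{1-d}$, and compatibility with the trivial character forces $\omega^{1-d}=1$, i.e.\ $A_6$-invariant foliations can only occur in degrees $d\equiv 1\pmod 3$.

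It remains to locate the first degree with $h(d)>0$, which is exactly what Theorem~\ref{molienthm} computes through the $\chi$-isotypic Molien series, evaluated class by class from the eigenvalues of $\tilde G$ on $V$. For the icosahedral group this is already forced by an orbit count: the singular scheme of $\F$ has length $d^2+d+1$ and is $G$-invariant, and, since a primitive group has no fixed point on $\PP^2$, its orbit sizes lie in $\{6,10,12,15,\dots\}$; as neither $7$ nor $13$ is a sum of such sizes, degrees $2$ and $3$ are impossible and $d\ge 4$. For the Klein and Valentiner groups this counting is too coarse, and one genuinely needs the vanishing $h(d)=0$ for $d\le 6$, respectively $d\le 10$, which the Molien series provides; for $A_6$ this is reinforced by the congruence $d\equiv 1\pmod 3$, which already leaves only $d\in\{4,7,10\}$ to exclude below the threshold.

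The main obstacle is precisely this last computation: the faithful evaluation of the semi-invariant Molien series for $\operatorname{PSL}(2,\mathbb{F}_7)$ and for $3\cdot A_6$. This requires fixing explicit generators of the three-dimensional representations, tabulating the eigenvalue triples on every conjugacy class, and --- crucially for $3\cdot A_6$ --- keeping careful track of the central character attached to each residue of $d$ modulo $3$, as this is what pushes the first nonzero degree up to the stated thresholds. Once the vanishing of $h(d)$ below $d_0(G)$ is established, the monotonicity argument of the first paragraph completes the proof.
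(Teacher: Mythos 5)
Your strategy coincides with the paper's: reduce to the three groups $A_5$, $\operatorname{PSL}(2,\mathbb{F}_7)$ and the Valentiner group, use $|\aut(\F)|\ge 3(d^2+d+1)$ to bound the degree from above ($d\le 3$, $d\le 6$, $d\le 10$ respectively), and then invoke Theorem~\ref{molienthm} with the trivial character --- forced by simplicity, resp.\ perfectness of the triple cover --- to show that no semi-invariant divergence-free homogeneous vector field exists in those degrees. Your setup is correct, including the identification of $h(d)$ with the difference of the two Molien series coming from the exact sequence that quotients out the radial field, and the observation that the center of the triple cover of $A_6$ forces $d\equiv 1\pmod 3$; both points match the paper exactly (the Valentiner series the paper computes is supported on exponents $\equiv 1\pmod 3$). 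The one genuinely different ingredient is your elementary orbit count for $A_5$: since the minimal orbit has $6$ points and all orbit sizes are even or at least $15$, a $G$-invariant singular scheme of length $7$ or $13$ cannot exist. The paper does not argue this way for $A_5$ --- it simply reads $d\ge 5$ off the Poincar\'e series $t^5+t^6+t^7+\cdots$ --- but it uses precisely this kind of orbit argument for the normal subgroup $E$ of the Hessian group, so the shortcut is sound and buys you one case without any computation.

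The gap is the one you flag yourself: for $\operatorname{PSL}(2,\mathbb{F}_7)$ and the Valentiner group the vanishing of the semi-invariant Poincar\'e series in degrees $\le 6$, resp.\ $\le 10$, is asserted as ``what the Molien series provides'' but never established, and this verification is essentially the entire content of the paper's proof. The paper fixes explicit generators of the three-dimensional representations in $\SL(3,\C)$, evaluates the formula of Theorem~\ref{molienthm} class by class (by computer algebra), and obtains $P(V^{G};t)=t^8+t^9+t^{11}+\cdots$ for the Klein group and $P(V^{G};t)=t^{16}+t^{19}+\cdots$ for the Valentiner group (and $t^5+t^6+\cdots$ for $A_5$), which immediately yields the contradiction. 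Without these expansions your argument is a correct and well-organized plan rather than a proof; nothing in it would fail, but the decisive step remains to be done.
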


\section{Foliations with finite group of diagonal automorphisms}\label{sec2}
In this section we study foliations on $\mathbb{P}^2$  having finitely many automorphisms of the form 
 $$[X:Y:Z]\mapsto [aX:bY:Z]$$
 which correspond to $(x,y) \mapsto (ax,by)$ in the natural affine coordinates on $\{Z\neq 0\}$.
 
Consider a polynomial 1-form $\omega=Adx+Bdy$  in $\mathbb{C}^2$. We assume that  $A$ and $B$ have no common factor in   $\mathbb{C}[x,y]$, so that $\omega$ has isolated singularities.  
We can write 
$$\omega=\sum\limits_{i,j\ge 0} \alpha_{ij} x^i y^j dx +
\sum\limits_{i,j\ge 0}  \beta_{ij} x^i y^j dy,$$ where $\alpha_{ij},\beta_{ij}\in\mathbb{C}$. To each monomial  form $ \alpha_{ij} x^i y^j dx$, $\alpha_{ij}\neq 0$ or  $\beta_{ij} x^i y^j dy$, $\beta_{ij}\neq 0$ appearing in $\omega$, associate respectively the monomial $x^{i+1} y^j$ or $x^{i} y^{j+1}$ and consider the set 
$$\mathcal{M}=\{x^{i+1} y^j\colon \alpha_{ij}\neq 0\}\cup 
\{x^{i} y^{j+1}\colon \beta_{ij}\neq 0\}.$$ Set $$\mathcal{M}'=\{\mathfrak{m}-\mathfrak{m}'\colon \mathfrak{m}, \mathfrak{m}'\in \mathcal{M}\}.$$ Given $f\in\mathcal{M}'$ nonzero, write $f=x^ky^l\tilde{f}$ with $k,l\ge 0$ and $(\tilde{f},xy)=1$, and  define the set $$S=\{\tilde{f}\colon f\in\mathcal{M}'\}.$$  

 Let $\mathcal{F}$ be the singular holomorphic foliation defined by $\omega$ on $\mathbb{C}^2$ and let $G$ be the group of transformations  of the form $(x,y)\mapsto (ax,by)$, $a,b\in\mathbb{C}^*$ that leave $\mathcal{F}$ invariant. 
 Precisely,  $G$ is composed by the maps $g$ of the form $(x,y)\mapsto (ax,by)$ such that $g^\ast \omega\wedge \omega =0$ which is equivalent to $g^*\omega=\theta\omega$ for some $\theta\in\mathbb{C}^*$. 
\begin{lemma} \label{calculo}Let $\omega$, $S$ and $G$ be as above.  Let $V(S)\subset \mathbb{C}^2$ be the algebraic set defined by $S$. Then there exists a bijection between $G$ and $V(S)\cap (\mathbb{C}^\ast)^2$. Precisely, the map  $(x,y)\mapsto (ax,by)$, $a,b\in\mathbb{C}^*$ belongs to $G$  if and only if $(a,b)$ belongs to $V(S)$. In particular $G$ is finite if and only if so is $V(S)\cap (\mathbb{C}^\ast)^2$. 
\end{lemma}
\begin{proof}
Let $g(x,y)=(ax,by)$ with $a, b\in\mathbb{C}^*$. If $g\in G$ there exists 
$\theta\in\mathbb{C}^*$ such that  $g^*\omega=\theta\omega$, that is: 
$$\sum\limits_{\alpha_{ij}\neq 0} a^{i+1}b^j\alpha_{ij} x^i y^j dx +
\sum\limits_{\beta_{ij}\neq 0}  a^i b^{j+1}\beta_{ij} x^i y^j dy=  \sum\limits_{\alpha_{ij}\neq 0} \theta\alpha_{ij} x^i y^j dx +
\sum\limits_{\beta_{ij}\neq 0}  \theta\beta_{ij} x^i y^j dy,    $$
\begin{equation}\label{eqtheta}
\sum\limits_{\alpha_{ij}\neq 0} (a^{i+1}b^j-\theta)\alpha_{ij} x^i y^j dx +
\sum\limits_{\beta_{ij}\neq 0}  (a^i b^{j+1}-\theta)\beta_{ij} x^i y^j dy= 0.
\end{equation}

 It follows that the numbers $a^{i+1}b^j$ for $\alpha_{ij}\neq 0$ and  $a^{i}b^{j+1}$ for $\beta_{ij}\neq 0$ are all equal. Therefore $(a,b)\in V(S)$. This proves $G\subset V(S)\cap (\mathbb{C}^\ast)^2$.
 
 Conversely, for any $(a,b)\in V(S)\cap (\mathbb{C}^\ast)^2$ we define the automorphism $g(x,y) = (ax,by)$. From the definition of $S$ we know that there exists $\theta \in \mathbb{C}^\ast$ such that the equation \ref{eqtheta} holds i.e. $g\in G$. Hence $V(S)\cap (\mathbb{C}^\ast)^2 \subset G$.
  \end{proof}
We now state and prove the main result of this section.
\begin{proposition} \label{pro1}Let $\omega$, $\mathcal{F}$ and $G$ be as in Lemma \ref{calculo} and suppose moreover that  $G$ is finite. Let $d$ be the degree of the extension of $\mathcal{F}$ to $\mathbb{P}^2$.  Then  
$$|G|\le d^2+d+1$$ and equality holds if and only if, up to the symmetry $(x,y)\mapsto (y,x)$,  the form $\omega$ is equal to
   $$\omega_{\alpha,\beta,\rho}=(\alpha + \rho x^dy)dx+(\beta y^{d}-\rho x^{d+1})dy$$ for some $\alpha,\beta,\rho\in\mathbb{C}^*$. 
  \end{proposition}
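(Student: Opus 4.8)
The plan is to compute $|G|$ as the index of a lattice in $\mathbb{Z}^2$ and then to bound that index by a convex-geometric estimate on the exponents of $\omega$. Identifying a monomial $x^py^q$ with the point $(p,q)$, let $E\subset\mathbb{Z}_{\ge 0}^2$ be the exponent set of $\mathcal{M}$ and let $\Lambda\subset\mathbb{Z}^2$ be the subgroup generated by the differences $e-e'$, $e,e'\in E$ --- equivalently, the exponents of the binomials cut out from $\mathcal{M}'$. Writing $(a,b)^{(u,v)}=a^ub^v$, Lemma \ref{calculo} says that $(x,y)\mapsto(ax,by)$ lies in $G$ iff $(a,b)^\lambda=1$ for all $\lambda\in\Lambda$; thus $G$ is the subgroup of $(\mathbb{C}^\ast)^2$ annihilated by $\Lambda$. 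By the usual torus--character duality (Smith normal form), $G\cong\operatorname{Hom}(\mathbb{Z}^2/\Lambda,\mathbb{C}^\ast)$, so $G$ is finite iff $\Lambda$ has rank $2$, and then $|G|=[\mathbb{Z}^2:\Lambda]$. Since $G$ is finite, $\Lambda$ has rank $2$ and $E$ contains three affinely independent points.

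To bound the index I would use that, for affinely independent $e_0,e_1,e_2\in E$, the sublattice $\langle e_1-e_0,e_2-e_0\rangle\subset\Lambda$ has index $|\det(e_1-e_0,e_2-e_0)|$, twice the area of the triangle $e_0e_1e_2$; hence $|G|=[\mathbb{Z}^2:\Lambda]\le 2\,\operatorname{Area}(\triangle)$ for the largest-area triangle $\triangle$ with vertices in $E$. It therefore suffices to bound triangle areas inside the exponent region permitted by a degree-$d$ form. Writing $\omega=\sum_{j=0}^k(A_jdx+B_jdy)$ as in Section \ref{definiciones}, every point of $E$ has $1\le p+q\le k+1$, with $p+q=k+1$ coming only from the top part $A_kdx+B_kdy$. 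If the line at infinity is invariant ($d=k$) the region is $R_1=\{(p,q)\ge0:1\le p+q\le d+1\}$; if it is transverse ($d=k-1$, $A_kx+B_ky=0$), then writing $A_k=y\tilde A$, $B_k=-x\tilde A$ shows every exponent with $p+q=d+2$ satisfies $p,q\ge1$, so the region is $R_2=R_1\cup\{(p,q):p,q\ge1,\ p+q=d+2\}$.

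Inspecting the finitely many convex-hull vertices of each region gives the maximal doubled area: $d^2+d$ over $R_1$ (e.g. at $(1,0),(d+1,0),(0,d+1)$) and $d^2+d+1$ over $R_2$, the latter attained only by $\tau=\{(1,0),(d+1,1),(0,d+1)\}$ and its image under $(p,q)\mapsto(q,p)$. This yields $|G|\le d^2+d+1$.

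The equality case is where I expect the real work. Because the transverse region gives the strictly larger bound, $|G|=d^2+d+1$ forces the line at infinity to be transverse and forces a non-degenerate triangle in $E$ to attain the maximal area; by the uniqueness above, after applying $(x,y)\mapsto(y,x)$ we may assume this triangle is $\tau$, whence $\Lambda=\langle(d,1),(-1,d+1)\rangle$ and $E$ lies in a single coset of $\Lambda$. A direct enumeration of $\big((1,0)+\Lambda\big)\cap R_2$ then shows this coset meets $R_2$ only in the three vertices of $\tau$, so $E=\tau$. Reading off which coefficients of $\omega$ create these exponents gives $A=\alpha+\rho x^dy$ and $B=\beta y^d+\rho'x^{d+1}$ with $\alpha,\beta\neq0$ and $(\rho,\rho')\neq(0,0)$; the transversality relation $A_{d+1}x+B_{d+1}y=0$ forces $\rho'=-\rho$, and hence $\rho\neq0$, which is precisely $\omega_{\alpha,\beta,\rho}$. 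Conversely one checks directly that $\omega_{\alpha,\beta,\rho}$ has coprime coefficients and exponent set $\tau$, so that $|G|=d^2+d+1$. The two delicate points are the uniqueness of the extremal triangle in $R_2$ and the coset count, as these --- rather than the bound itself --- are what rigidify $\omega$.
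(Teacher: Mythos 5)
Your argument is correct, but it takes a genuinely different route from the paper's. The paper works directly with the set $S$ of ``purified'' binomials extracted from differences of monomials in $\mathcal{M}$, proves the inductive B\'ezout-type estimate of Lemma \ref{bezout} to get $|V(S)|\le N\deg f$, and then identifies the extremal $\mathcal{M}=\{x^{d+1}y,x,y^{d+1}\}$ by a page of hands-on monomial comparisons. You instead pass through torus--lattice duality, which upgrades the paper's inequality to the exact identity $|G|=[\mathbb{Z}^2:\Lambda]$; the bound becomes a convex-geometric statement about maximal doubled triangle areas in the exponent regions (your values $d^2+d$ for the invariant case and $d^2+d+1$ for the transverse case agree with the paper's two subcases), and the equality analysis reduces to rigidity of the extremal triangle plus a coset enumeration. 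What this buys is exactness and a structurally cleaner equality case: the identification of $\mathcal{M}$ is forced by $E\subseteq(1,0)+\Lambda$ together with $\bigl((1,0)+\Lambda\bigr)\cap R_2=\tau$, rather than recovered by elimination. The cost is that the two points you flag as delicate genuinely must be written out, and ``inspecting the convex-hull vertices'' does not quite cover the first one: to see that $\tau$ and its swap are the \emph{only} lattice-point triangles of doubled area $d^2+d+1$ in $R_2$ you need the standard fact that a maximal-area inscribed triangle has all vertices at extreme points unless a side is parallel to a hull edge, and then to check that no side of $\tau$ (directions $(d,1)$, $(d+1,-d)$, $(-1,d+1)$) is parallel to an edge of the hexagonal hull of $R_2$ (directions $(1,0)$, $(0,1)$, $(1,-1)$). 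The coset count likewise needs the observation that, since $\gcd(d,d+1)=1$, each level $p+q=c+1$ with $0\le c\le d+1$ meets $(1,0)+\Lambda$ in at most one admissible point, and only $c\in\{0,d,d+1\}$ contribute. I verified both claims hold, so with these details supplied your proof is complete and arguably more conceptual than the one in the paper.
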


We need the following lemma, whose proof is given at the end of this section.
\begin{lemma}\label{bezout} Let $S$ be a finite set of polynomials in $\mathbb{C}[x,y]$ and set   $N=\max\limits_{f\in S}\deg f$.  Suppose that $V(S)$ is finite. Then 
$ |V(S)|\le N\deg f$ for all $f\in S$. 
\end{lemma}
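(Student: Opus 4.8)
The plan is to fix an arbitrary $f\in S$ and bound $|V(S)|$ by decomposing the plane curve $V(f)$ into its irreducible components and estimating the number of points of $V(S)$ lying on each component separately, via Bézout's theorem. We may assume every element of $S$ is nonzero (a nonzero constant in $S$ would force $V(S)=\emptyset$ and make the bound trivial), and I would write $f=f_1^{a_1}\cdots f_r^{a_r}$ with the $f_i\in\mathbb{C}[x,y]$ pairwise non-associate irreducible factors, so that $V(f)=\bigcup_{i=1}^r V(f_i)$ and $\sum_{i=1}^r\deg f_i\le\deg f$.

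The key observation is that finiteness of $V(S)$ prevents any entire component of $V(f)$ from lying inside $V(S)$. Indeed, each $V(f_i)$ is an irreducible affine plane curve, hence an infinite set, so $V(f_i)\not\subseteq V(S)$; therefore for each $i$ there is some $g_i\in S$ that does not vanish identically on $V(f_i)$, that is, $f_i\nmid g_i$. Since $f_i$ is irreducible this yields $\gcd(f_i,g_i)=1$, so $V(f_i)\cap V(g_i)$ is finite and, by Bézout's theorem (two plane curves with no common component meet in at most the product of their degrees many points), it has at most $\deg f_i\cdot\deg g_i\le N\deg f_i$ points.

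To conclude I would note that, since $V(S)\subseteq V(f)=\bigcup_i V(f_i)$ and every point of $V(S)$ annihilates all of $S$, in particular each $g_i$, we have $V(S)\cap V(f_i)\subseteq V(f_i)\cap V(g_i)$. Summing over the components then gives
\[
|V(S)|\le\sum_{i=1}^r |V(f_i)\cap V(g_i)|\le N\sum_{i=1}^r\deg f_i\le N\deg f,
\]
which is exactly the claimed inequality.

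The only genuinely delicate point is the existence, for each component $V(f_i)$, of a member of $S$ not vanishing on it; this is precisely where the hypothesis that $V(S)$ is finite enters, and it is what lets us sidestep a possible common factor between $f$ and any single fixed element of $S$. An alternative to the component-by-component argument would be to produce one polynomial $h=\sum_{g\in S}\lambda_g g$ of degree $\le N$ that is coprime to $f$, by choosing the coefficients $\lambda_g$ outside the finitely many proper linear subspaces $\{\lambda\colon f_i\mid h\}$ and then applying Bézout once to the pair $f,h$. Both routes rest on the same use of finiteness, and I expect the component version to be the cleaner to write up.
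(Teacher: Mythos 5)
Your argument is correct, but it takes a genuinely different route from the paper's. The paper proceeds by induction on $|S|$: writing $S=\{f_1,\dots,f_{k+1}\}$ with $f_1=f$, it factors out the greatest common divisor $h$ of $f_1,\dots,f_k$, writes $f_j=hg_j$, uses the inclusion $V(S)\subseteq V(h,f_{k+1})\cup V(g_1,\dots,g_k)$, bounds $|V(h,f_{k+1})|\le N\deg h$ by B\'ezout (finiteness of $V(S)$ guarantees that $h$ and $f_{k+1}$ are coprime) and $|V(g_1,\dots,g_k)|\le N\deg g_1$ by the inductive hypothesis, and adds the two contributions to get $N\deg f$. You instead fix the single polynomial $f$, decompose $V(f)$ into its irreducible components $V(f_i)$, and use finiteness of $V(S)$ componentwise to produce, for each $i$, some $g_i\in S$ with $f_i\nmid g_i$; one application of B\'ezout per component together with $\sum_i\deg f_i\le\deg f$ then finishes the proof. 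Both arguments turn on the same mechanism --- finiteness of $V(S)$ rules out a common component --- but yours localizes it to each irreducible factor of $f$ and avoids induction altogether, which also spares you the small verification, left implicit in the paper, that $V(g_1,\dots,g_k)$ is finite so that the inductive hypothesis may be invoked; the paper's recursion, on the other hand, never needs to factor anything into irreducibles. Your treatment of the degenerate cases (constant elements of $S$, and implicitly the zero polynomial) is adequate.
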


\noindent\emph{Proof of Proposition \ref{pro1}.} 
 Suppose first that the line at infinity is invariant by $\mathcal{F}$. Let us prove that $|G|\le d^2+d$.
Since the line at infinity is invariant by $\mathcal{F}$, we have $\deg A, \deg B\le d$, hence the monomials in $\mathcal{M}$ have at most degree $d+1$ and therefore 
$\deg\tilde{f}\le d+1$ for all  $\tilde{f}\in S$ -- here $S$ is as defined in the beginning of this section. Thus, in view of Lemma \ref{bezout}, in order to prove that $|G|\le d^2+d$ it suffices to find some $\tilde{f}\in S$ with $\deg\tilde{f}\le d$. Observe that there exist three pairwise distinct monomials in $\mathcal{M}$; otherwise $S$ would contain only one element modulo sign  and $V(S)$ would be infinite. We can choose two between these three monomials, say $\mathfrak{m}_1$ and $\mathfrak{m}_2$, such that
both are divisible by $x$ or both are divisible by $y$. Then we have 
$\mathfrak{m}_1-\mathfrak{m}_2=x^ky^l\tilde{f}$ with $\tilde{f}\in S$, $k+l\ge 1$, so that $\deg\tilde{f}\le d$.

  Suppose now that the line at infinity is generically transverse to $\mathcal{F}$.
   Then the polynomial form $\omega$ defining $\mathcal{F}$ can be expressed as
$$\omega= \tilde{A}dx+\tilde{B}dy+yQ dx-xQdy,$$ 
where $\tilde{A}=0$ or $\deg\tilde{A}\le d$, $\tilde{B}=0$ or $\deg\tilde{B}\le d$,  and $Q\neq 0$ is homogeneous of degree $d$. Take two monomials $\mathfrak{m},\mathfrak{m}'\in \mathcal{M}$ with $f=\mathfrak{m}-\mathfrak{m}'\neq 0$. If these two monomials are associated to monomial forms appearing in
$\tilde{A}dx+\tilde{B}dy$, we have $\deg \mathfrak{m}, \deg\mathfrak{m}'\le d+1$ and therefore $\deg\tilde{f}\le\deg f\le d+1$. If $\mathfrak{m}$ or $\mathfrak{m}'$,  say $\mathfrak{m}$,
 is associated to a monomial form appearing in $yQ dx-xQdy$, then 
$\mathfrak{m}\in(xy)$. Then, since $\mathfrak{m}'$ is divisible by $x$ or $y$, we have  $f=\mathfrak{m}-\mathfrak{m}'=x^ky^l\tilde{f}$ with $k+l\ge 1$ and we obtain
again $\deg\tilde{f}\le d+1$. Therefore $\deg\tilde{f}\le d+1$ for all $\tilde{f}\in S$. If there exists $\tilde{f}\in S$ with $\deg\tilde{f}< d+1$, again by Lemma \ref{bezout} we obtain $|G|\le d^2+d$. Thus we can assume that $\deg\tilde{f}= d+1$
 for all $\tilde{f}\in S$.  Suppose that there exist two different monomials $\mathfrak{m}$ and $\mathfrak{m}'$ in $\mathcal{M}$  of degree $d+2$. Then these monomials are associated to monomial forms in $yQ dx-xQdy$, so  both monomials $\mathfrak{m}$ and $\mathfrak{m}'$ are divisible by $xy$ and therefore  $\mathfrak{m}-\mathfrak{m}'$ generates an element  in $S$ of degree $\le d$, which is a contradiction. We conclude that there exists a unique monomial in $\mathcal{M}$ of the form 
 $\mathfrak{m}=x^{p+1}y^{q+1}$, $p,q\ge 0$, $p+q=d$ and any other monomial in $\mathcal{M}$ has degree $\le d+1$.  As above, 
there exist at least two other monomials $\mathfrak{m}_1$ and $\mathfrak{m}_2$ different from $\mathfrak{m}$. Suppose that there exists another monomial $\mathfrak{m}_3\in\mathcal{M}$ different from $\mathfrak{m}$, $\mathfrak{m}_1$ and $\mathfrak{m}_2$. Then, there is a pair of monomials in $\{\mathfrak{m}_1,\mathfrak{m}_2,\mathfrak{m}_3\}$, say 
$\mathfrak{m}_1$ and $\mathfrak{m}_2$, that are both divisible by $x$ or both divisible by $y$.
 Then, since $\deg\mathfrak{m}_1,\deg\mathfrak{m}_2\le d+1$,  we have that
$\mathfrak{m}_1-\mathfrak{m}_2$ generates an element in $S$ of degree smaller than $d+1$, which is a contradiction. 
Then  $\mathcal{M}$ contains exactly two monomials other than $\mathfrak{m}$, say $\mathfrak{m}_1$ and $\mathfrak{m}_2$, which have degree at most $d+1$. If $\mathfrak{m}_1$ and $\mathfrak{m}_2$ have some common factor, 
again we have that  $\mathfrak{m}_1-\mathfrak{m}_2$ generates an element in $S$ of degree smaller than $d+1$; so we can assume that $\mathfrak{m}_1=x^{k_1}$,   $\mathfrak{m}_2=y^{k_2}$ for some $k_1,k_2\in\mathbb{N}$.  Then  $\tilde{f}=x^{k_1}-y^{k_2}$ is contained in $S$, so that  $\max\{k_1,k_2\}=\deg \tilde{f}=d+1$. Suppose that $k_2=d+1$; the 
other case is equal to the fist one up to the symmetry $(x,y)\mapsto (y,x)$. Since  $\mathfrak{m}-\mathfrak{m}_2=x^{p+1}y^{q+1}-y^{d+1}$ generates an element in $S$ of degree $d+1$, we deduce that  $q=0$, so that
$\mathfrak{m}=x^{d+1}y$. Now, since $\mathfrak{m}-\mathfrak{m}_1=x^{d+1}y-x^{k_1}$ generates an element of degree $d+1$ in $S$, we obtain  $k_1=1$ and therefore $\mathfrak{m}_1=x$. Then $\mathcal{M}=\{x^{d+1}y,x,y^{d+1}\}$ and therefore
 $$S=\{\pm(x^dy-1),\pm(x^{d+1}-y^d),\pm(x-y^{d+1})\}.$$
 It follows that
 $$V(S)= V(x^dy-1,x^{d+1}-y^d),$$ which gives \begin{equation}\label{igualdad}|G|=|V(S)|=d^2+d+1.\end{equation} Observe that the equality above only happens if the line at infinity is generically transverse to $\mathcal{F}$ and 
 $\mathcal{M}=\{x^{d+1}y,x,y^{d+1}\}$. From the construction of $\mathcal{M}$ it follows that $\tilde{A} = \alpha$, $\tilde{B} = \beta y^d$ and $Q = \rho x^d$ i.e
 $$\omega_{\alpha,\beta,\rho}=(\alpha + \rho x^dy)dx+(\beta y^{d}-\rho x^{d+1})dy,$$ where $\alpha,\beta,\rho\in\mathbb{C}^*$. Finally, it is easy to see  that the foliation defined by $\omega_{\alpha,\beta,\rho}$ has degree $d$ and  $d^2+d+1$ diagonal automorphisms. \qed\\

\noindent\emph{Proof of Proposition \ref{dg}.} Recall that a dihedral subgroup of $\operatorname{PGL}(2,\mathbb{C})$ is generated by a cyclic subgroup of index two which fixes two different points in $\mathbb{P}^1$ together with an automorphism of $\mathbb{P}^1$ permuting  these two points.  Then there are homogeneous coordinates such that 
$\aut (\F)$ is generated by a group of diagonal transformations
  together with a map of the form  $$R\colon[x:y:z]\mapsto [\mu y:\nu x:z],$$ where $\mu,\nu\in\mathbb{C}^*$. By Proposition \ref{pro1}, the subgroup $G$ of diagonal transformations in $\aut (\F)$ satisfies $|G|\le d^2+d+1$. Thus, since  $[\aut (\F):G]=2$, we obtain $$|\aut (\F)|\le 2(d^2+d+1).$$ \qed

We finish this section with a lemma which will be used later in the proof of Proposition \ref{nd}.
\begin{lemma}\label{aux} Let $\mathcal{F}$, $\mathcal{M}$ and $G$ be as defined at the beginning of this section.   Suppose that
\begin{enumerate}
\item $\mathcal{F}$ has degree $d\ge 2$;
\item the line at infinity is invariant by $\mathcal{F}$; and
\item  $\mathcal{M}$ contains  $M=\{xy,x^dy,xy^d\}$.
\end{enumerate}
Then $|G|= (d-1)^2$ if  $\mathcal{M}=M$ and
$|G|<(d^2+d+1)/2$ otherwise.
\end{lemma}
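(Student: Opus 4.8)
The plan is to recast $|G|$ as the index of a lattice in $\Z^2$ and then exploit the degree bound coming from the invariance of the line at infinity. Writing $\mathbf{e}(x^iy^j)=(i,j)$ for the exponent vector of a monomial, the computation in the proof of Lemma \ref{calculo} shows that $(x,y)\mapsto(ax,by)$ lies in $G$ precisely when the number $a^eb^f$ takes the same value for every monomial $x^ey^f\in\mathcal{M}$; equivalently, $a^ub^v=1$ for every $(u,v)$ in the lattice $L\subseteq\Z^2$ generated by the differences $\mathbf{e}(\mathfrak{m})-\mathbf{e}(\mathfrak{m}')$ with $\mathfrak{m},\mathfrak{m}'\in\mathcal{M}$. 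Thus $G\cong\operatorname{Hom}(\Z^2/L,\C^{\ast})$, and whenever $L$ has rank $2$ this yields the clean identity $|G|=[\Z^2:L]$: indeed $G$ is the kernel of the surjective endomorphism of $(\C^{\ast})^2$ whose matrix is a basis of $L$, and such a kernel has order equal to the absolute value of the determinant.

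First I would settle the case of $M$ itself. The exponent vectors of $xy,\ x^dy,\ xy^d$ are $(1,1),(d,1),(1,d)$, whose pairwise differences generate $(d-1,0)$ and $(0,d-1)$; hence the lattice $L_M$ attached to $M$ is exactly $(d-1)\Z^2$, of index $(d-1)^2$. Since hypothesis (3) gives $M\subseteq\mathcal{M}$, the full lattice satisfies $L\supseteq L_M$, so $L$ has rank $2$ and $G$ is automatically finite. In particular, if $\mathcal{M}=M$ then $L=L_M$ and $|G|=(d-1)^2$, which is the first assertion.

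Now assume $\mathcal{M}\supsetneq M$ and choose a monomial $\mathfrak{m}_0=x^ey^f\in\mathcal{M}\setminus M$. Because the line at infinity is invariant we have $\deg A,\deg B\le d$, so every monomial of $\mathcal{M}$ has degree at most $d+1$; in particular $e+f\le d+1$. The heart of the argument is the elementary claim that, for $d\ge 3$, the only monomials with nonnegative exponents, total degree $\le d+1$, and exponent vector congruent to $(1,1)$ modulo $(d-1)\Z^2$ are exactly $xy,\ x^dy,\ xy^d$: indeed $e\equiv f\equiv 1\ (\mathrm{mod}\ d-1)$ forces $e,f\in\{1,d,2d-1,\dots\}$, and $e+f\le d+1$ then leaves only $(1,1),(d,1),(1,d)$. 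Consequently $\mathbf{e}(\mathfrak{m}_0)\not\equiv(1,1)$ modulo $L_M$, so $\mathbf{e}(\mathfrak{m}_0)-\mathbf{e}(xy)$ belongs to $L\setminus L_M$ and the inclusion $L_M\subsetneq L$ is strict. Therefore $[\Z^2:L]$ is a proper divisor of $[\Z^2:L_M]=(d-1)^2$, giving $|G|=[\Z^2:L]\le\tfrac12(d-1)^2<\tfrac12(d^2+d+1)$, where the final inequality is just $(d-1)^2<d^2+d+1$. The remaining case $d=2$ is trivial, since there $L_M=\Z^2$ and hence $|G|=1$ for any admissible $\mathcal{M}$.

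I expect the only genuine obstacle to be the combinatorial claim that isolates $M$ among the low-degree monomials in the fixed residue class $(1,1)+(d-1)\Z^2$; once this is established, the strict inclusion $L_M\subsetneq L$ and the multiplicativity of lattice indices finish the proof, and no B\'ezout-type estimate such as Lemma \ref{bezout} is required.
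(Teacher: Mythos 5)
Your argument is correct, and it takes a genuinely different route from the paper's. The paper stays with the polynomial system $S$: it computes $V(S)$ explicitly when $\mathcal{M}=M$, and otherwise runs a case analysis on the shape of the extra monomial ($x^k$, $y^k$, or $x^ky^l$), in each case extracting two elements of $S$, bounding $|G|$ by the cardinality of each of the two corresponding zero loci via the B\'ezout-type Lemma \ref{bezout}, and averaging the two bounds to reach $(d-1)^2/2$. Your reformulation $G\cong\operatorname{Hom}(\Z^2/L,\C^{\ast})$, hence $|G|=[\Z^2:L]$ once $L$ has rank $2$ (which is guaranteed by $M\subseteq\mathcal{M}$, so finiteness of $G$ is automatic), compresses all of this into one divisibility statement: the degree bound $\deg\mathfrak{m}\le d+1$ coming from invariance of the line at infinity, combined with the congruence condition modulo $L_M=(d-1)\Z^2$, isolates $\{xy,x^dy,xy^d\}$ as the only monomials in the class of $(1,1)$ for $d\ge3$, so any extra monomial forces $L\supsetneq L_M$ and the index drops by a factor of at least $2$. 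This eliminates the case analysis and the appeal to Lemma \ref{bezout}, yields the same worst-case bound $(d-1)^2/2<(d^2+d+1)/2$ uniformly, and correctly dispatches $d=2$ as degenerate. The only ingredients you must (and do) justify beyond the paper's setup are the identification of $|G|$ with the lattice index via the kernel of an isogeny of $(\C^{\ast})^2$, and the combinatorial claim about residues mod $d-1$; both are sound.
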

\begin{proof}
Suppose $\mathcal{M}=M$. Then $$S=\{\pm(x^{d-1}-1),\pm(y^{d-1}-1),\pm(x^{d-1}-y^{d-1})\},$$
so that $$|G|=\# V(x^{d-1}-1,y^{d-1}-1)=(d-1)^2.$$
Assume that $\mathcal{M}$ contains some monomial $\mathfrak{m}\notin M$. Suppose first that $\mathfrak{m}=x^k$ for some positive integer $k$.
If $k=1$, we obtain that  $y-1$ belongs to $S$ and therefore 
$$|G|\le \# V \left( y-1,x^{d-1}-1 \right)=d-1<\frac{d^2+d+1}{2}.$$ 
If $k\in\{d,d+1\}$, then $x^k-x^dy=x^d(x^{\xi}-y)$ for $\xi\in\{0,1\}$, hence $x^{\xi}-y$ belongs to $S$ and therefore
$$|G|\le \# V \left( x^{\xi}-y,x^{d-1}-1 \right)=d-1<\frac{d^2+d+1}{2}.$$ 
Thus we may assume $1<k<d$. We have $x^k-xy=x(x^{k-1}-y)$, so that $x^{k-1}-y$ belongs to $S$ and therefore
\begin{align} |G|\le \#V\left(x^{k-1}-y, y^{d-1}-1\right) 
\label{upa} =  (k-1)(d-1).
\end{align} 
 On the other hand, $x^dy-x^k=x^k(x^{d-k}y-1)$, so that $x^{d-k}y-1$ belongs to $S$ and therefore 
\begin{align}\label{uta}|G|\le \#V\left(x^{d-k}y-1, y^{d-1}-1\right)=(d-k)(d-1).\end{align} If we sum the inequalities \eqref{upa}
and \eqref{uta} we obtain $$|G|\le \frac{(k-1)(d-1)+(d-k)(d-1)}{2}=\frac{(d-1)^2}{2}<\frac{d^2+d+1}{2}.$$ 
If $\mathfrak{m}=y^k$ for some $k\in\mathbb{N}$, the proof follows as above. Thus, we can assume that 
$\mathfrak{m}=x^{k}y^{l}$ with $k,l\ge 1$, $k+l\le d+1$. We can suppose $k\ge l$; the other case is quite similar. Then, since $\mathfrak{m}\neq xy$, we have $k>1$. Then $x^{k}y^{l}-xy=(x^{k-1}y^{l-1}-1)$, so  $x^{k-1}y^{l-1}-1$ belongs to $S$ and therefore 
\begin{align} |G|\le \#V\left(x^{k-1}y^{l-1}-1, y^{d-1}-1\right) 
\label{upa1} =  (k-1)(d-1).
\end{align} Since $\mathfrak{m}\neq x^d y$, we have $k<d$. Then 
$x^dy-x^{k}y^{l}=x^ky(x^{d-k}-y^{l-1})$, so  $x^{d-k}-y^{l-1}$ belongs to $S$ and therefore
\begin{align}\label{uta1}|G|\le \#V\left(x^{d-k}-y^{l-1}, y^{d-1}-1\right)=(d-k)(d-1).\end{align}Finally, as above the result follows
by summing \eqref{upa1} and \eqref{uta1}.

\end{proof}

\noindent\emph{Proof of Lemma \ref{bezout}.}  By Bézout's Theorem the lemma holds for $|S|\le 2$.  Assume that Lemma holds if $|S|\le k\in\mathbb{N}$. Suppose now that $|S|=k+1$ and fix any $f\in S$. Write $$S=\{f_1,\ldots, f_{k+1}\}$$ with $f_1=f$. We can put $f_1=hg_1,\ldots, f_k=hg_k$, where $h$ is the greatest common divisor of the $f_j$; so the set $V(g_1,\ldots,g_k)$ is finite and by the inductive hypothesis 
$$|V(g_1,\ldots,g_k)|\le N\deg g_1.$$ Observe that $$V(S)\subset V(h,f_{k+1})\cup V(g_1,\ldots,g_k).$$
Clearly $h$ and $f_{k+1}$ have no common factor, otherwise $V(S)$ would be infinite. Then, by Bézout's Theorem we have $$ |V(h,f_{k+1})|\le  \deg f_{k+1}\deg h\le N\deg h$$ and therefore
 \begin{align*}|V(S)|\le |V(h,f_{k+1})| + |V(g_1,\ldots,g_k)|\\ 
 \le N \deg h +N\deg g_1= N\deg f.
 \end{align*}   \qed

\section{Transitive imprimitive groups}\label{stig}
This section is devoted to prove Proposition \ref{tig1} and Proposition \ref{nd}.
We need the following lemma whose proof is easily obtained and so we omit it.  
\begin{lemma} \label{mon}Consider the polynomial 1-form with isolated singularities on $\mathbb{C}^2$ given by 
$$\omega=\sum\limits_{i,j\ge 0} \alpha_{ij} x^i y^j dx +
\sum\limits_{i,j\ge 0}  \beta_{ij} x^i y^j dy,$$ where $\alpha_{ij},\beta_{ij}\in\mathbb{C}$. Let $\mathcal{F}$ be the holomorphic foliation defined by $\omega$ on $\mathbb{P}^2$ and suppose that $\mathcal{F}$ is invariant by the transformation 
$$R\colon[x:y:z]\mapsto [\mu y:\nu x:z],$$ where $\mu,\nu\in\mathbb{C}^*$.
  Then,  $\alpha_{ij}\neq 0$  if and only if   $\beta_{ji}\neq 0$.
\end{lemma}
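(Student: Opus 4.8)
The plan is to exploit directly the meaning of invariance, reducing the statement to a single comparison of coefficients. First I would promote the invariance of $\F$ under $R$ to a genuine functional identity. Since $R$ is a linear automorphism of $\PP^2$ and $\omega=A\,dx+B\,dy$ is a polynomial $1$-form with isolated singularities (so $A=\sum\alpha_{ij}x^iy^j$ and $B=\sum\beta_{ij}x^iy^j$ are coprime), the pullback $R^{\ast}\omega$ is again a polynomial $1$-form of the same degree, with isolated singularities, defining the same foliation. Thus $R^{\ast}\omega\wedge\omega=0$, and writing $R^{\ast}\omega=g\,\omega$ with $g$ rational, coprimality of $A$ and $B$ forces $g$ to be a polynomial; applying the same argument to $R^{-1}$ shows $g$ is a nonzero constant. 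Hence there is $\theta\in\C^{\ast}$ with $R^{\ast}\omega=\theta\,\omega$.

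Next I would compute $R^{\ast}\omega$ explicitly. In the affine chart the map reads $(x,y)\mapsto(\mu y,\nu x)$, so that $R^{\ast}dx=\mu\,dy$, $R^{\ast}dy=\nu\,dx$, and $R^{\ast}(x^iy^j)=\mu^i\nu^j\,x^jy^i$. Substituting into $\omega$ gives
$$R^{\ast}\omega=\sum_{i,j}\beta_{ij}\,\mu^i\nu^{j+1}\,x^jy^i\,dx+\sum_{i,j}\alpha_{ij}\,\mu^{i+1}\nu^j\,x^jy^i\,dy.$$
Comparing the coefficient of $x^iy^j\,dx$ on both sides of $R^{\ast}\omega=\theta\,\omega$ (which amounts to the reindexing $(i,j)\mapsto(j,i)$ in the $dx$-sum) yields the family of relations
$$\theta\,\alpha_{ij}=\mu^{\,j}\nu^{\,i+1}\,\beta_{ji}.$$
Since $\theta,\mu,\nu\in\C^{\ast}$, the right-hand side vanishes precisely when $\beta_{ji}=0$, so $\alpha_{ij}\neq 0$ if and only if $\beta_{ji}\neq 0$, which is the assertion.

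The only point requiring any care is the first step — upgrading the mere proportionality $R^{\ast}\omega\wedge\omega=0$ to the identity $R^{\ast}\omega=\theta\,\omega$ with a \emph{constant} factor — and this is exactly where the hypothesis that $\omega$ has isolated singularities (coprime coefficients) is used. Everything after that is pure bookkeeping of exponents. As a consistency check, matching the $dy$-coefficients instead produces the companion relation $\theta\,\beta_{ij}=\mu^{\,j+1}\nu^{\,i}\,\alpha_{ji}$, which is equivalent to the one above and confirms the same nonvanishing equivalence.
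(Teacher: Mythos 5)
Your proof is correct; note that the paper itself omits the proof of Lemma \ref{mon} as ``easily obtained,'' and your argument is exactly the intended direct computation: upgrade invariance to $R^{\ast}\omega=\theta\omega$ with $\theta\in\C^{\ast}$ (using that two polynomial $1$-forms with isolated singularities defining the same foliation differ by a nonzero constant), then match coefficients to get $\theta\,\alpha_{ij}=\mu^{j}\nu^{i+1}\beta_{ji}$. The index bookkeeping and the justification of the constant factor via coprimality of $A$ and $B$ are both accurate.
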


\noindent\emph{Proof of Proposition \ref{tig1}.} Let $G<\aut(\F)$ be the subgroup of transformations of the form $$[x:y:z]\mapsto [ax:by:z].$$ It is not difficult to see that $[\aut(\F):G]=3$. Then  by Proposition \ref{pro1} we obtain $$|\aut(\F)|=3 |G| \le 3(d^2+d+1).$$ Suppose that the inequality above is an equality. Then, by Proposition \ref{pro1} we have that $\F$
is defined by the 1-form 
 $$\omega=(\alpha + \rho x^dy)dx+(\beta y^{d}-\rho x^{d+1})dy$$ for some $\alpha,\beta,\rho\in\mathbb{C}^*$. Since $\F$ is invariant by the transformation $$ [x:y:z]\mapsto [y:z:x],$$
 by substituting $(x,y)\leftarrow (y/x,1/x)$ in $\omega$ we obtain a meromorphic 1-form $\tilde{\omega}$ such that
   $x^{d+2}\tilde{\omega}=\theta\omega$ for some $\theta\in\mathbb{C}^*$. By a direct computation we obtain
   \begin{align*}x^{d+2}\tilde{\omega}&= x^{d+2}\left(\left[\alpha + \rho (y/x)^d\left(1/x\right)\right]\frac{xdy-ydx}{x^2}-\left[\beta (1/x)^{d}-\rho (y/x)^{d+1}\right]\frac{dx}{x^2}\right)\\
 &=(-\beta - \alpha x^dy)dx+(\rho y^{d}+\alpha x^{d+1})dy,
 \end{align*}
hence
 $-\beta=\theta\alpha$, $-\alpha=\theta\rho$, $\rho=\theta\beta$ and $\alpha=-\theta\rho$ and therefore $\theta^3=1$, $\alpha=-\theta\rho$ and $\beta=\theta^2\rho$. From this we conclude that $\mathcal{F}$ is defined by the 1-form 
$$\omega_\theta= (x^dy-\theta)dx+(\theta^2 y^{d}-x^{d+1})dy,$$ where $\theta^3=1$. Take $a,b\in\mathbb{C}^*$ such that $b^{d^2+d+1}=\theta^{1-d}$ and $a=\theta b^{d+1}$, and consider the map $f(x,y)=(ax,by)$. A direct computation shows that  
\begin{align*}f^*(\omega_\theta)&=(a^{d+1}bx^dy-\theta a)dx+(\theta^2b^{d+1} y^{d}-a^{d+1}bx^{d+1})dy\\
&=\theta^2b^{d+1}\left[(x^dy-1)dx+(y^{d}-x^{d+1})dy\right],
\end{align*} so the  foliation $\mathcal{F}$ is equivalent to the Jouanolou Foliation of degree $d$. \qed \\

\noindent\emph{Proof of Proposition \ref{nd}.}   
Let $\omega=Adx+Bdy$ be a polynomial 1-form with isolated singularities defining $\mathcal{F}$ on $\mathbb{C}^2=\{[x:y:1]\colon x,y\in\mathbb{C}\}$.
 Let $G< \aut(\F)$ be the subgroup of transformations of the form $$[x:y:z]\mapsto [ax:by:z].$$ It is not difficult to see that $[\aut(\F):G]=6$. 
Suppose that the line at infinity $z=0$ is invariant by $\mathcal{F}$. 
   Since the three lines $x=0$, $y=0$ and $z=0$ are permuted by $$T\colon [x:y:z]\mapsto [y:z:x] \text{ and } R\colon[x:y:z]\mapsto [\mu y:\nu x:z] $$ which leave $\mathcal{F}$ invariant, 
   then the three lines $x=0$, $y=0$ and $z=0$ are invariant by $\mathcal{F}$.
    The invariance of the line $y=0$ means that $A(x,0)=0$. Thus, since $\omega$ has isolated singularities the polynomial $B(x,0)$ is not zero, so we have 
     \begin{equation}\label{xdy}B(x,y)= c_kx^k+\cdots + c_l x^l +O(y),
     \end{equation} where $0\le k\le l\le d$, $c_k\neq 0$, $c_l\neq 0$.
      If we denote $L=\{y=0\}$, $0:=[0:0:1]$ and $\infty=[1:0:0]$, from equation \eqref{xdy} we have that
     $$\operatorname{Z}(\mathcal{F},L, 0)=k,\; \operatorname{Z}(\mathcal{F},L,\infty)= d+1-l, $$ where $\operatorname{Z}$ denote the index defined in in \cite[p.15]{Bru} which coincides the Gomez-Mont-Seade-Verjovsky index \cite{gsv}. Indeed $\operatorname{Z}(\mathcal{F},L, 0)$ is the vanishing order of $B(x,0)$ and $\operatorname{Z}(\mathcal{F},L,\infty)$ is the vanishing order of 
     $$
     z^{d+1}B\left(\frac{1}{z},0\right) = c_kz^{d+1-k}+\cdots + c_l z^{d+1-l}. 
     $$
     Observe that $R\circ T$ leave $L$ invariant and maps $0$ to $\infty$. Thus, since $\mathcal{F}$ is invariant by $R\circ T$ we have that $$\operatorname{Z}(\mathcal{F},L, 0)=\operatorname{Z}(\mathcal{F},L,\infty),$$ hence 
     \begin{equation}\label{kl}k+l= d+1,\end{equation} and therefore \begin{equation}\label{km}k\le \frac{d+1}{2}.\end{equation} 
    From \eqref{xdy} 
    we see that  the monomial $x^kdy$ appear in $\omega$ with a nonzero coefficient and, by Lemma \ref{mon},
    the same happens with $y^kdx$. Then the monomials $x^ky$ and $xy^k$ are contained in $\mathcal{M}$ and therefore
     the difference $x^ky-xy^k$ generates the element $x^{k-1}-y^{k-1}$  in $S$ if $k>1$, or the element $y-x$ in $S$ if $k=0$. Anyway, if we assume $k\neq 1$, the monomial $x^{|k-1|}-y^{|k-1|}$ belongs to $S$; we leave the case $k=1$ for later. 
        It follows from Lemma \ref{calculo}  that for all transformation $g(x,y)=(ax,by)$  in $G$ we have that  
      \begin{equation}\label{eq3}a^{|k-1|}=b^{|k-1|}.\end{equation} Moreover, if $g\in G$ then   $$T\circ g\circ T^{-1}\colon(x,y)\mapsto \left(\frac{b}{a}x,\frac{1}{a}y\right)$$ also belongs to $G$,
      so by equation \eqref{eq3} we have $$(b/a)^{|k-1|}=(1/a)^{|k-1|}$$ and therefore
      \begin{equation}\label{eq4}a^{|k-1|}=b^{|k-1|}=1,\end{equation} for all $g\in G$.  It follows from these equations that $|G|\le |k-1|\cdot|k-1|$ and together with inequality \eqref{km} we obtain  
     $$|\aut(\F)|\le 6|k-1|^2\le 6\left(\frac{d-1}{2}\right)^2<3(d^2+d+1).$$ 
   Suppose now that $k=1$. Then  by equation \eqref{kl}
   we have  $l=d$.  From  equation \eqref{xdy}  the monomial $xdy$ and $x^ddy$ appear in $\omega$ with a nonzero coefficient and, by Lemma \ref{mon}, the same happens for the monomials $ydx$ and $y^ddx$. Then the monomials $xy$, $x^dy$ and $xy^d$ are contained in $\mathcal{M}$. If $\mathcal{M}$ contains some other monomial,  by Lemma \ref{aux} we have 
   $$|G|<\frac{d^2+d+1}{2},$$ so that $$|\aut(\F)|<3(d^2+d+1).$$
   Then assume that \begin{equation}\label{mm}\mathcal{M}=\{xy,x^dy,xy^d\}.\end{equation}
    Therefore, again by Lemma \ref{aux} we obtain $$|\aut(\F)|=6(d-1)^2.$$
   We shall determine the possibilities for $\omega$. From equation \eqref{mm} we see that, besides $xdy$, $x^ddy$, $ydx$ and $y^ddx$, the form  $\omega$ could contain the monomials $x^{d-1}ydx$ and $xy^{d-1}dy$, so that, up to multiplication by a nonzero complex number, we can write
   $$\omega=ydx+py^ddx+qx^{d-1}ydx+axdy+bx^ddy+cxy^{d-1}dy,$$
   where $p,q,a,b,c\in\mathbb{C}$ and $p,a,b\in\mathbb{C}^*$.
   Since $\mathcal{F}$ is invariant by the transformation $T$, by substituting $(x,y)\leftarrow (y/x,1/x)$ in $\omega$ we obtain a form $\tilde{\omega}$ such that
   $x^{d+2}\tilde{\omega}=\theta\omega$ for some $\theta\in\mathbb{C}^*$. By a direct computation we obtain
   $$x^{d+2}\tilde{\omega}= (-p-c)ydx+(-q-b)y^ddx+(-a-1)x^{d-1}ydx+pxdy+x^ddy+qxy^{d-1}dy,$$
  so we have 
   \begin{equation}\label{sistema}-p-c=\theta,\; -q-b=\theta p,\; -a-1=\theta q,\; p=\theta a,\; 1=\theta b,\; q=\theta c.\end{equation}
  By substituting $q=-\theta^{-1}(a+1)$, $p=\theta a$ and  $b=\theta^{-1}$ in the second equation above we have
  $$\theta^{-1}(a+1)-\theta^{-1}=\theta^2 a,$$  so that $\theta^3=1$. Then we find that the equations \eqref{sistema} are equivalent to  $$q=-\theta^{2}(a+1),\;p=\theta a,\; b=\theta^{2},\;c=-\theta(a+1), \; a\in\mathbb{C}^*,$$ so that $\omega$ is given by
 $$\omega_{a,\theta}=ydx+\theta ay^ddx-\theta^2(a+1)x^{d-1}ydx
 +axdy+\theta^2x^ddy-\theta(a+1)xy^{d-1}dy.$$
Since $\mathcal{F}$ is invariant by  $R$, if we do the substitution $(x,y)\leftarrow (\mu y,\nu x)$ in $\omega_{a,\theta}$ we must obtain 
a form $\tilde{\omega}$ such that $\tilde{\omega}=\lambda\omega_{a,\theta}$ for some $\lambda\in\mathbb{C}^*$.  A simple computation gives
\begin{align*}\tilde{\omega}=\nu\mu xdy+\nu^d\mu\theta ax^ddy-\mu^d\nu\theta^2(a+1)y^{d-1}xdy&\\
+\nu\mu aydx+\nu\mu^d&\theta^2y^ddx-\nu^d\mu\theta(a+1)yx^{d-1}dx,\end{align*}

so that \begin{align} \label{sis}\nu\mu=\lambda a, \;   \nu^d\mu\theta a= \lambda \theta^2    , \;   -\mu^d\nu\theta^2(a+1)&= -\lambda\theta(a+1)   ,  \\  
\nonumber\nu\mu a= \lambda , \;   \nu\mu^d \theta^2= \lambda\theta a    , &  \; -\nu^d\mu\theta(a+1)= -\lambda\theta^2(a+1). 
\end{align}
from $\nu\mu=\lambda a$ and $\nu\mu a= \lambda$ we obtain $a=\pm 1$. Suppose first that $a=-1$. Then the system \eqref{sis} becomes
$$\nu\mu=-\lambda,\;  \nu^{d-1} = \theta   ,\; \mu^{d-1} =\theta^2.$$ Thus, in this case the form $\omega$ effectively
exists and is given by 
$$\omega_{-1,\theta}=ydx-\theta y^ddx
 -xdy+\theta^2x^ddy.$$ By substituting $(x,y)\leftarrow (\alpha x, \beta y)$, $\alpha^{d-1}=\theta$, 
 $\beta^{d-1}=\theta^2$ in the form above we obtain
 the form
 $$\alpha\beta\left[ydx-y^ddx
 -xdy+x^ddy\right]$$ and therefore $\mathcal{F}$ is isomorphic to the foliation $\mathcal{F}_d$.
 Suppose now that $a=1$. Then the system \eqref{sis} becomes 
$$\nu\mu=\lambda,\;  \nu^{d-1} =\theta   ,\; \mu^{d-1} =\theta^2$$ and we obtain
 $$\omega_{1,\theta}=ydx+\theta y^ddx-2\theta^2 x^{d-1}ydx
 +xdy+\theta^2x^ddy-2\theta xy^{d-1}dy.$$
Again, by substituting $(x,y)\leftarrow (\alpha x, \beta y)$, $\alpha^{d-1}=\theta$, 
 $\beta^{d-1}=\theta^2$ we obtain
 the form
   $$\alpha\beta \left[ydx+ y^ddx-2 x^{d-1}ydx
 +xdy+x^ddy-2xy^{d-1}dy\right],$$ so that $\mathcal{F}$ is isomorphic to $\mathcal{G}_d$. This finishes the 
 proof of the part $(1)$ of Proposition \ref{nd}.

     Now, assume that  the line at infinity $L_\infty = \{z=0\}$ is generically transverse to $\mathcal{F}$. Then the line $y=0$ is also generically transverse to $\mathcal{F}$, so that $A(x,0)$ is not zero. Therefore 
        $$A(x,0)=c_kx^k+O(x^{k+1})$$ with $c_k\neq 0$, $k\ge 0$,  so that  $\omega$ contains the monomial $x^kdx$ and,
          by Lemma \ref{mon}, also the monomial  $y^kdy$.
        It follows from Proposition \ref{pro1}  that for all transformation $g(x,y)=(ax,by)$  in $G$ we have 
      \begin{equation}\label{eq6}a^{k+1}=b^{k+1}.\end{equation} Thus, since  $TgT^{-1}\colon (x,y)\mapsto \left(\frac{b}{a}x,\frac{1}{a}y\right)$ also belongs to $G$ we have $$(b/a)^{k+1}=(1/a)^{k+1}$$ and therefore
      \begin{equation}\label{eq7}a^{k+1}=b^{k+1}=1.\end{equation} From these equations we obtain $|G|\le (k+1)(k+1)$  and consequently \begin{equation}\label{6k}|\aut(\F)|\le 6(k+1)^2.\end{equation} 
      Observe that $k$ is the  tangency order of $\mathcal{F}$ to $L=\{y=0\}$ at
       $0=[0:0:1]$. Since $R\circ T$ leave $L$ invariant and maps $0$ to $\infty=[1:0:0]$, we have
      \begin{equation}\label{tang}\operatorname{Tang} (\mathcal{F},L, 0)=\operatorname{Tang} (\mathcal{F},L,\infty)=k. \end{equation} Thus, since the total tangency of $\mathcal{F}$ to the line $L$ is exactly $d$, we conclude that $k\le d/2$. Then, from Equation \ref{6k} we have 
      \begin{equation}\nonumber|\aut(\F)|\le 6\left(\frac{d}{2}+1\right)^2,\end{equation}  
      so we obtain \begin{equation}\label{6kd}|\aut(\F)|< 3(d^2+d+1)\end{equation} for all $d>2$. Thus, to continue with the remaining case we assume $d=2$. Then we have $k\in\{0,1\}$.  If $k=0$, Equation \eqref{6k} gives
       $|\aut(\F)|=6<3(d^2+d+1)$. Then we may assume $k=1$ and therefore $\omega$ contains the monomials $xdx$ and  $ydy$ with nonzero coefficients.  Since $\F$ is generically transverse to the line at infinity $L_\infty$, the homogeneous part of maximum degree of $\omega$ has the form 
       $$Q(ydx-xdy),$$ where $Q\in\mathbb{C}[x,y]$ is homogeneous of degree $2$. From  \eqref{tang} and the symmetries of $\F$ we deduce that $\F$ has nonzero tangency orders with $L_\infty$ at the points $[0:1:0]$ and $[1:0:0]$. This means that $x$ and $y$ are factors of $Q$, so we have that $Q=cxy$ for some $c\in\mathbb{C}^*$. Clearly we can assume that $c=1$. Then $\omega$ contains the monomials $xy^2dx$ and $-x^2ydy$. We deduce that the set of monomials $\mathcal{M}$ -- as defined in Section \ref{sec2} -- contains 
       $$M= \{x^2, y^2, x^2y^2\}.$$ Let us prove that $\mathcal{M}\neq M$ implies $|\aut(\F)|<3(d^2+d+1)$. Observe that $\mathcal{M}\backslash M$ can only contain monomials of degrees $2$ or $3$, so we have 
       $$\mathcal{M}\backslash  M\subset\{xy,x^3,y^3,x^2y,xy^2\}.$$ For example, suppose that $xy\in\mathcal{M}$. Then $xy-y^2$ and $x^2y^2-y^2$ generate the elements $x-y$ and $x^2-1$ in the set $S$ -- as defined in Section \ref{sec2}. Then $$|G|\le \# V\left(x-y,x^2-1\right)= 2,$$ so that 
       $$|\aut(\F)|\le 12<3(d^2+d+1).$$ The analysis for each of the other possible monomials in $\mathcal{M}\backslash M$ is quite similar. 
        Then we can assume $$\mathcal{M}= \{x^2, y^2, x^2y^2\}$$
    and therefore $\omega$ has the form
       $$\omega=\left(ax+xy^2\right)dx+\left(by-x^2y\right)dy,$$ where $a,b\in\mathbb{C}^*$. 
 Since $\mathcal{F}$ is invariant by the transformation $T$, by substituting $(x,y)\leftarrow (y/x,1/x)$ in $\omega$ we obtain a form $\tilde{\omega}$ such that
   $x^{4}\tilde{\omega}=\theta\omega$ for some $\theta\in\mathbb{C}^*$. By a direct computation we have
   $$x^{4}\tilde{\omega}= \left(-bx-axy^2\right)dx+\left(y+ax^2y\right)dy,$$
   so that $-b=\theta a$, $-a=\theta $, $1=\theta b$ and we obtain obtain $\theta^3=1$, $a=-\theta$, $b=\theta^2$. Then 
  $$\omega=\left(-\theta x+xy^2\right)dx+\left(\theta^2y-x^2y\right)dy.$$
  By substituting $(x,y)\leftarrow (\alpha x, \beta y)$, $\alpha^{2}=\theta^2$, 
 $\beta^{2}=\theta$ in the form above we obtain
 
 $$\left(-x+xy^2\right)dx+\left(y-x^2y\right)dy$$ and therefore $\mathcal{F}$ is isomorphic to the foliation $\mathcal{S}$. \qed
      
 \section{Polyhedral groups}\label{poly}
 
 This section is devoted to prove Proposition \ref{polyhedral}. We begin with some definitions. Let $F$ be a complex polynomial in two variables, let  $\omega$ be a polynomial 1-form in two variables and let $G$ be a subgroup of $\operatorname{GL}(2,\mathbb{C})$. We say that $F$ is semi-invariant by $g\in\operatorname{GL}(2,\mathbb{C})$ if there exist a constant $\lambda\in\mathbb{C}^*$ such that $F(g):= F\circ g =\lambda F$. Similarly,  the 1-form $\omega$ is semi-invariant by $g\in\operatorname{GL}(2,\mathbb{C})$ if there exist a constant $\lambda \in\mathbb{C}^*$ such that $g^*(\omega)=\lambda \omega$.  We say that $F$  (resp. $\omega$) is $G$-semi-invariant if $F$  (resp. $\omega$) is semi-invariant by every $g\in G$. We  say that the 1-form $\omega\neq 0$ is   homogeneous of degree $n$, if we have  $\omega=Adx+Bdy$, where $A$ (resp. $B$) is a homogeneous polynomial of degree $n$ or $A=0$ (resp. $B=0$). \\
 
As we have seen in Subsection \ref{gg}, in suitable homogeneous coordinates $(X:Y:Z)$ the group $\aut (\mathcal{F})$ preserves $\{Z=0\}$ hence it corresponds finite non abelian subgroup $G$ of $\operatorname{GL}(2,\mathbb{C})$ acting on the affine chart $\{Z\neq 0\}$ with natural coordinates $(x,y)$. Moreover we have that its image $\overline{G}<\operatorname{PGL}(2,\mathbb{C})$ is a polyhedral group.
  The foliation $\F$ is defined by a $G$-semi-invariant polynomial 1-form $\omega$, which can be expressed as a sum  $$\omega=\sum_{j\in \mathcal{J}}\omega_j,$$ where $\mathcal{J}\subset \{0,\ldots, d+1\}$ and each $\omega_j$ is homogeneous of degree $j$.
Observe
  that the origin $0\in\mathbb{C}^2$ need to be a singularity of $\mathcal{F}$, otherwise the tangent line to $\mathcal{F}$ at the origin would define a fixed point of $\overline{G}$, 
  which is impossible because $\overline{G}$ is a polyhedral group. Then we have  $\mathcal{J}\subset \{1,\ldots, d+1\}$. Moreover, notice that  $|\mathcal{J}|\ge 2$, otherwise $\omega$ would be homogeneous and $\aut (\F)$ would be infinite. 
  Given any $g\in G$,  there exists a constant $\lambda \in\mathbb{C}^*$ such that  
 $$\sum\limits_{j\in\mathcal{J}}g^*(\omega_j)= \lambda \sum\limits_{j\in\mathcal{J}}\omega_j.$$ Then, since $g$ is linear we necessarily have $$ g^*(\omega_j)= \lambda \omega_j$$ for all $j\in\mathcal{J}$, from which we conclude that each $\omega_j$ is $G$-semi-invariant.

 \begin{lemma}\label{div2} 
	Let $\omega\neq 0$ be a homogeneous 1-form of degree $n$ in two variables. Then we can express
	\[
	\omega = dP + Q(xdy-ydx),
	\]  
	where $P$ and $Q$ are homogeneous of degrees $n+1$ and $n-1$ respectively. Moreover if $\omega$ is ${G}$-semi-invariant for some $G<\gl(2,\mathbb{C})$ then so are $P$ and $Q$. 
	
\end{lemma}

\begin{proof} Let $\omega=Adx+Bdy$ and 
	define $P = \frac{1}{n+1}(Ax+By)$ and $Q=\frac{1}{n+1}\left(\frac{\partial B}{\partial x}-\frac{\partial A}{\partial y}\right)$. A straightforward computation -- we only need to use the relations $nA=x\frac{\partial A}{\partial x}+y\frac{\partial A}{\partial y}$ and $nB= x\frac{\partial B}{\partial x}+y\frac{\partial B}{\partial y}$ -- shows that $\omega=dP+Q(xdy-ydx)$.  It remains to prove that $P$ and $Q$ are $G$-semi-invariant if so is $\omega$. Let $g(x,y) = (ax+by,cx+dy) \in {G}$. Since $g^*(\omega)=\lambda \omega$ for some $\lambda \in\mathbb{C}^*$, we have $$A(g) d(ax+by)+B(g) d(cx+dy)=\lambda Adx+\lambda Bdy,$$ so that 
	\begin{align*}
	A(g) a+B( g)c &= \lambda A,\\
	 A(g)b+B(g)d &= \lambda B.
	\end{align*}
Then $$P(g) =\frac{1}{n+1}\left[A(g) (ax+by)+B(g) (cx+dy)\right]=\lambda P$$ and therefore $P$ is semi-invariant by $g$.  From the last equation we obtain $$g^*(dP)=d\left( P(g)\right)=\lambda dP,$$ so $dP$ is semi-invariant by $g$.  
Then $Q(xdy-ydx)=\omega-dP$ is semi-invariant by $g$ and from this we easily obtain that $Q$ is semi-invariant by $g$. 
\end{proof}
 
Now we continue with the proof of Proposition \ref{polyhedral}.  The group $\overline{G}$ is the quotient of $G$ by its subgroup $H$ of homotheties. We will relate the order of $H$ with the degree of the foliation.  If $|H|=n$, we have that $H$  is generated by an element of the form $h=\xi I$, where $\xi$ is a primitive $nth$-root of unity.  Since $\omega$ is semi-invariant by $h$ there exist $c\in\mathbb{C}^*$ such that
 $\sum h^*(\omega_j)= c\sum\omega_j,$ that is
 $$\sum\limits_{j\in\mathcal{J}} \xi^{j+1}\omega_j= c\sum\limits_{j\in\mathcal{J}} \omega_j.$$ Then $\xi^{j+1}=c$ for all $j\in\mathcal{J}$.  Thus, if $j_1 <j_2$ are elements in $\mathcal{J}$,  we have $\xi^{j_2-j_1}=1$ and therefore $n\le j_2-j_1$. We conclude 
 that
 \begin{equation}\label{reto} |H|\le |j_2-j_1|,\;\forall j_1, j_2 \in \mathcal{J} \textrm{ distinct}. 
 \end{equation}
 In particular we obtain  $$|H|\le d$$
 that together with our assumption $3(d^2+d+1) \le \left|G\right|$ leads to
 \begin{equation}\label{ineqp}
3(d^2+d+1) \le \left|G\right|= \left|\overline{G}\right||H|\le \left|\overline{G}\right| d.
\end{equation}

According to Lemma \ref{div2} we can express $\omega_j=dP_j+Q_j(xdy-ydx)$ for each 
$j\in\mathcal{J}$.   Notice the following facts:
\begin{enumerate}
\item \label{fact1}If $d+1\in\mathcal{J}$, then $P_{d+1}$ is zero: if $\omega_{d+1}\neq 0$,  the foliation is generically transverse to the line at infinity and we have $\omega_{d+1}=Q(xdy-ydx)$ for some homogeneous polynomial $Q$. If this were the case, we would have
$$dP_{d+1}+Q_{d+1}(xdy-ydx)=Q(xdy-ydx)$$ and it is easy to see that this would imply that $P_{d+1}=0$.
\item The polynomial  $P_{j}$ is not zero for some $j\in\mathcal{J}$. Otherwise $\F$ would be the radial foliation $xdy-ydx=0$ and $\aut(\F)$ would be infinite.
\end{enumerate}

First recall that for any finite subgroup $\overline{G} < \pgl(2,\mathbb{C})$ the stabilizer $\overline{G}_p$ of any point $p\in\mathbb{P}^1$ is cyclic. Then the smallest orbits of such group correspond to its elements with largest order. We refer to Klein's book \cite{K} for generalities on the polyhedral groups.
 
  \noindent Case 1: suppose that $\overline{G}=T$ the tetrahedral group. Since $|T|=12$, Inequality \eqref{ineqp} implies that $d=2$. Take $j'\in\mathcal{J}$ such $P_{j'}\neq 0$. By the fact \eqref{fact1} above we have $j'\le d=2$, so 
  that $\deg P_{j'}\le 3$. Then $P_{j'}=0$ defines a set of order $\le 3$ in $\mathbb{P}^1$ 
   which is invariant  by the action of $T$; but this is a contradiction because the smallest orbit of $T$
    in $\mathbb{P}^1$ has order $4$. \\

  \noindent Case 2: suppose that $\overline{G}=O$ the octahedral group.  The smallest orbit of $O$ has order $6$ and it is well known that, up to  linear change of coordinates, we can assume that this orbit corresponds to the 6 lines defined by the polynomial 
  $$P=xy(x^4-y^4).$$  Since $|O|=24$, Inequality \eqref{ineqp} implies that $d\le 6$. 
  Take $\omega_{j'}=dP_{j'}+Q_{j'}(xdy-ydx)$ with $P_{j'}\neq 0$.  By the fact \eqref{fact1} above we have $j'\le d\le 6$, so that $\deg P_{j'}\le 7$.  Thus, since $P_{j'}=0$ defines a finite 
  set in $\mathbb{P}^1$ which is invariant by $O$ and the smallest orbits of $O$ have orders $6$ and $8$, we conclude that $\deg P_{j'}=6$, ${j'}=5$  and 
  any other $P_j$ is zero. Moreover, since $P_5$ has degree 6 and $P_5=0$ has to be an equation of the 6 lines composing the orbit of order $6$ 
  of $O$,  we have  $P_5= \beta P$ for some 
  $\beta\in\mathbb{C}^*$. Observe that $Q_5=0$, otherwise $Q_5=0$ would define a set of order $<6$ which would be invariant by $O$. Then $\omega_5=\beta dP$, hence $d\ge5$ and from Inequality \eqref{ineqp} we 
  obtain that $|H|\ge 4$. Then, from Inequality \eqref{reto} we deduce that the only other $\omega_j\neq 0$ that can exist -- and actually exists because $\omega$ is not homogeneous -- is $\omega_1$. Since $P_1$ is zero, we have $\omega_1=\alpha(xdy-ydx)$ for some $\alpha\in\mathbb{C}^*$ and therefore $$\omega=\alpha(xdy-ydx)+\beta dP.$$ Applying the substitution
   $(x,y)\leftarrow (\lambda x, \lambda y)$ with $\lambda^4=\alpha/\beta$ we obtain the 1-form
    $$\alpha\lambda^2\left(xdy-ydx+dP\right),$$ so that $\F=\mathcal{P}_5$. \\

\noindent Case 3: suppose that $\overline{G}=I$ the icosahedral group. 
The smallest orbit of $O$ has order $12$ and it is well known that, up to linear change of coordinates, we can assume that this orbit corresponds to the 12 lines defined by the polynomial 
  $$P=xy(x^{10}+11x^5y^5-y^{10}).$$  Since $|I|=60$, Inequality \eqref{ineqp} implies that $d\le 18$. 
  Take $\omega_{j'}=dP_{j'}+Q_{j'}(xdy-ydx)$ with $P_{j'}\neq 0$.   By the fact \eqref{fact1} above we have $j'\le d\le 18$, so that $\deg P_{j'}\le 19$.  Thus, since $P_{j'}=0$ defines a finite 
  set in $\mathbb{P}^1$ which is invariant by $I$ and the smallest orbits of $I$ have orders $12$ and $20$, we conclude that $\deg P_{j'}=12$,  ${j'}=11$  and 
  any other $P_j$ is zero. Moreover, since $P_{11}$ has degree 12 and $P_{11}=0$ is an equation of the 12 lines composing the orbit of order $12$ 
  of $I$,  we have  $P_{11}= \beta P$ for some 
  $\beta\in\mathbb{C}^*$. Recall that $Q_j=0$ defines a set of order $\deg Q_j=j-1\le d\le 18$ in $\mathbb{P}^1$, which
   is invariant by $I$. Thus, since the smallest orbits of $I$ have orders 12 and 20, we necessarily have 
    $\deg Q_j\in\{0,12\}$. If $\deg(Q_j)=12$, we have that $\omega_{13}\neq 0$ and,
   since $\omega_{11}$ is also nonzero, Inequality \eqref{reto} gives $|H|\le 2$, which together with Inequality
     \eqref{ineqp} gives $d\le 5$, a contradiction.   Thus we can only have $\deg(Q_j)=0$, so that the unique other 
     $\omega_j$ that can exist  -- and actually exists because $\omega$ is not homogeneous -- is $\omega_1$. 
       Since $P_1$ is zero, there exists $\alpha\in\mathbb{C}^*$ such that  $$\omega=\alpha(xdy-ydx)+\beta dP.$$ Applying the substitution
   $(x,y)\leftarrow (\lambda x, \lambda y)$ with $\lambda^{10}=\alpha/\beta$ we obtain the 1-form
    $$\alpha\lambda^2\left(xdy-ydx+ dP\right),$$ so that $\F=\mathcal{P}_{11}$. \qed
    
   \section{Poincar\'e series and a Molien-type formula}\label{moliensec}
To study the foliations invariant by a primitive subgroup of $\pgl(3,\C)$ we change our approach to a more direct one through representations of these groups. We refer to \cite{invar} for the basic results of Invariant Theory.  

Let $M=\bigoplus_{d\geq0}M_d$ be a graded vector space such that $\dim_{\C}(M_d) <\infty$ for each $d$. The Poincar\'e series of $M$ is defined by
\[
P(M;t) = \sum_{d\geq0} \dim_{\C}(M_d)t^d.
\]
When $M$ is a finitely generated module over a complex polynomial ring, the Hilbert-Serre theorem says that $P(M,t)$ is a rational function of the variable $t$. Given a finite group $G$, a graded $G$-module $M$ and a character $\chi:G\longrightarrow \C^{\ast}$, the Reynolds operator $\mathcal{R}_{\chi}:M\longrightarrow M$ is defined by
\[
\mathcal{R}_{\chi}(m) = \frac{1}{|G|}\sum_{\varphi \in G}\chi(\varphi)^{-1}\varphi \cdot m , \, m\in M.
\]
It is an idempotent graded operator whose image is the submodule $M^{\chi}$ of $G$-semi-invariants with character $\chi$. In particular, the dimension of each homogeneous component of $M^{\chi}$ is given by the trace of the appropriate restriction of $\mathcal{R}_{\chi}$.  For instance, let $M= \C[X_0, \dots , X_n]$. For a faithful representation $G\longrightarrow \gl(n+1,\C)$ Molien's formula exhibits the rational function:
\begin{equation}\label{molien}
P(M^{\chi};t) = \frac{1}{|G|} \sum_{g\in G}\frac{\chi(g)}{\det(I-tg)}.
\end{equation}

Given a  finite subgroup $G<\gl(3,\mathbb{C})$ and a character $\chi:G\longrightarrow \C^{\ast}$, we want to describe the homogeneous vector fields $v$ on $\mathbb{C}^3$  with $\dv(v)=0$ that are  $G$-semi-invariant -- see Section \ref{definiciones}. Since the arguments are valid in any dimension, we state and prove our main result for a finite dimensional vector space $W$. Suppose that $G<\gl(W)$ is a finite group.   Thus, we are interested in the $G$-module $V$ of polynomial vector fields  whose divergence is zero. The action is given by the pushforward and the grading  is given by the degree. Recall that when $W= \C^3$ a degree d foliation on $\mathbb{P}^2$ is defined by an element of $V_d$ that is unique up to scalar multiplication. The following theorem is a Molien-type formula.
\begin{theorem}\label{molienthm}
	Let $W$ be a finite dimensional vector space, let $G < \gl(W)$ be a finite group and let $\chi:G\longrightarrow \C^{\ast}$ be a character. Then 
	\[
	P(V^{\chi};t) = \frac{1}{|G|} \sum_{\varphi\in G}\frac{\chi(\varphi)(\mbox{tr}(\varphi^{-1})-t)}{\det(I-t\varphi)}.
	\]
\end{theorem}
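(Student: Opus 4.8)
The plan is to realize $V^{\chi}$ as the kernel of a single $G$-equivariant graded map and then read off its Poincaré series from Molien's formula applied to the two outer terms. Write $M=\operatorname{Sym}(W^{*})$ for the polynomial functions on $W$, so that $M_d$ is the space of homogeneous polynomials of degree $d$, and identify a homogeneous degree-$d$ vector field with an element of $M_d\otimes W$, where the tensor factor $W$ records the constant directions $\partial_{x_i}$ and carries the standard representation of $G$. The divergence then becomes a linear map $\dv\colon M_d\otimes W\to M_{d-1}$, and $V_d=\ker\dv$. I would first check that $\dv$ is surjective for $d\ge 1$ (given a monomial $m\in M_{d-1}$, its antiderivative $f$ in one variable satisfies $\dv(f\partial_{x_1})=m$), so that for every $d$ there is a short exact sequence $0\to V_d\to M_d\otimes W\xrightarrow{\dv}M_{d-1}\to 0$; assembling these over all $d$ gives a short exact sequence of graded vector spaces with a degree shift of one in the last term.

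The key structural point --- and the step I expect to be the main obstacle --- is that this sequence is $G$-equivariant, i.e. that $\dv$ intertwines the pushforward action on vector fields with the action $f\mapsto f\circ\varphi^{-1}$ on $M$. Although the pushforward $\varphi_*v=\varphi\cdot(v\circ\varphi^{-1})$ a priori involves the volume distortion $\det\varphi$, a direct computation (or the identity $L_{\varphi_*v}\mu=\varphi_*(L_v\varphi^{*}\mu)$ for the standard volume form $\mu$, using $\varphi^{*}\mu=(\det\varphi)\mu$) shows that the determinant factors cancel and $\dv(\varphi_*v)=(\dv v)\circ\varphi^{-1}$. Hence $\dv$ is a morphism of graded $G$-modules, and the short exact sequence above is a sequence of $G$-modules.

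Next I would pass to $\chi$-semi-invariants. Since the Reynolds operator $\mathcal{R}_{\chi}$ is a $G$-equivariant idempotent projecting onto the $\chi$-isotypic part, the functor $(-)^{\chi}$ is exact; equivalently, because $\dv$ commutes with $\mathcal{R}_{\chi}$, its restriction $(M\otimes W)^{\chi}\to M^{\chi}$ remains surjective. This yields
\[
0\longrightarrow V^{\chi}\longrightarrow (M\otimes W)^{\chi}\xrightarrow{\ \dv\ } M^{\chi}\longrightarrow 0 ,
\]
with the target shifted in degree by one. Additivity of Poincaré series on exact sequences, together with this shift contributing a factor $t$, then gives
\[
P(V^{\chi};t)=P\big((M\otimes W)^{\chi};t\big)-t\,P(M^{\chi};t).
\]

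Finally I would evaluate the two terms by the trace of $\mathcal{R}_{\chi}$. For $M^{\chi}$ this is exactly Molien's formula \eqref{molien}, $P(M^{\chi};t)=\frac{1}{|G|}\sum_{\varphi}\chi(\varphi)/\det(I-t\varphi)$. For $(M\otimes W)^{\chi}$ the trace on each graded piece factors as $\tr(\varphi\mid M_d)\,\tr(\varphi\mid W)$; the generating function of the first factor is $1/\det(I-t\varphi^{-1})$ and the second is the trace of the standard representation. Carrying out the same reindexing $\varphi\mapsto\varphi^{-1}$ that brings the sum into the normalization of \eqref{molien} turns the denominator into $\det(I-t\varphi)$ and the standard trace into $\tr(\varphi^{-1})$, giving $P((M\otimes W)^{\chi};t)=\frac{1}{|G|}\sum_{\varphi}\chi(\varphi)\tr(\varphi^{-1})/\det(I-t\varphi)$. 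Substituting both expressions into the displayed identity and combining over a common denominator yields
\[
P(V^{\chi};t)=\frac{1}{|G|}\sum_{\varphi\in G}\frac{\chi(\varphi)\big(\tr(\varphi^{-1})-t\big)}{\det(I-t\varphi)},
\]
which is the claimed formula. The only genuinely delicate points are the equivariance and surjectivity of the divergence and the careful bookkeeping of the degree shift; everything else is a formal consequence of Molien's theorem.
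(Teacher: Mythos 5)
Your proof is correct and follows essentially the same route as the paper: the same three-term exact sequence of graded $G$-modules with a degree shift of one, exactness under the Reynolds projector, and the same two Molien-type evaluations of $P((M\otimes W)^{\chi};t)$ and $P(M^{\chi};t)$. The only (immaterial) difference is that you realize $V_d$ as the kernel of the divergence map $M_d\otimes W\to M_{d-1}$, whereas the paper realizes it as the cokernel of multiplication by the radial vector field $M_{d-1}\to M_d\otimes W$; since $\dv(PR)$ is a nonzero scalar multiple of $P$, that sequence splits $G$-equivariantly and both presentations yield the identical Poincar\'e series identity.
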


\begin{proof}
	First notice that the space of homogeneous polynomials of degree $d$ on $W$ is naturally identified with $S^{d}(W^\vee)$, the symmetric power of the dual space.
	The space of degree $d$ homogeneous vector fields is naturally identified with $S^{d}(W^\vee)\otimes W$. Let $v\in S^{d}(W^\vee)\otimes W$ be a homogeneous vector field. For any $\varphi \in \gl(W)$ we have $\dv(\varphi_{\ast}v) = \dv(v)\circ \varphi^{-1}$. Then $V_d$ is a $\gl(W)$-submodule of  $S^{d}(W^\vee)\otimes W$. On the other hand, for a homogeneous polynomial $P$ we have $\varphi_{\ast}(PR) = (P\circ \varphi^{-1})R$, where $R$ is the radial vector field. This gives us the exact sequence of $\gl(W)$-modules 
	\begin{equation}\label{glex2}
	0 \longrightarrow S^{d-1}(W^\vee) \longrightarrow S^{d}(W^\vee)\otimes W \longrightarrow V_d \longrightarrow 0.
	\end{equation}
	Note that this sequence is induced by the Euler sequence of $\mathbb{P}(W)$ hence, if $\dim W = n+1$,  $V_d \simeq {\rm H}^0(\mathbb{P}^n, T\mathbb{P}^n(d-1))$. The $\gl(W)$-action  in  (\ref{glex2}) is as follows. Given $p \in S^{d}(W^\vee)$, $w\in W$ and $\varphi \in \gl(W)$ we have  $\varphi \cdot w = \varphi(w)$, 
	$\varphi \cdot p = p \circ \varphi^{-1} = \mbox{S}^d(\varphi^{-1})^T(p) $ and
	 $\varphi \cdot (p\otimes w) = \varphi\cdot p\otimes \varphi\cdot w$, where
	$\mbox{S}^d(\varphi)$ is the matrix symmetric power and the $T$ superscript stands for the transpose. 
	
	Let us denote
	\[
	\C[W] = \bigoplus_{d\geq0} S^d(W^\vee).
	\]
	Then taking the direct sum on each term of the sequence \eqref{glex2} gives the sequence of graded $\gl(W)$-modules
	\[
	0 \longrightarrow \C[W] \longrightarrow \C[W]\otimes W \longrightarrow V \longrightarrow 0,
	\]
	where the first map has degree one and the second has degree zero. Applying the Reynolds operator associated to $G$ and $\chi$ we have that the sequence
	\[
	0 \longrightarrow \C[W]^\chi \longrightarrow (\C[W]\otimes W)^\chi \longrightarrow V^\chi \longrightarrow 0.
	\]
	is exact. From the degrees of the maps, the Poincaré series of $V^\chi$ is computed by
	\[
	P(V^{\chi};t) = P((\C[W]\otimes W)^\chi;t) - tP(\C[W]^\chi;t).
	\]
	We will now calculate the right hand side of this equation. From the definition of $\mathcal{R}_\chi$ we have	
	\[
	\dim(\C[W]_d\otimes W)^{\chi} = \frac{1}{|G|}\sum_{\varphi \in G} \tr(\chi(\varphi)^{-1}\mbox{S}^d(\varphi^{-1})^T\otimes\varphi) =  \frac{1}{|G|}\sum_{\varphi \in G} \chi(\varphi)^{-1}\tr(\varphi)\tr(\mbox{S}^d(\varphi^{-1})).
	\]
	On the other hand,
	\[
	\sum_{d\geq0}\chi(\varphi)^{-1}\tr(\varphi)\tr(\mbox{S}^d(\varphi^{-1}))t^d = \chi(\varphi)^{-1}\tr(\varphi)\sum_{d\geq0}\tr(\mbox{S}^d(\varphi^{-1}))t^d = \frac{\chi(\varphi)^{-1}\tr(\varphi)}{\det(I-t\varphi^{-1})}
	\]
	and it follows that
	\begin{align}\label{pve}
	P((\C[W]\otimes W)^\chi;t) & = \frac{1}{|G|}\sum_{\varphi\in G}\sum_{d\geq 0}\chi(\varphi)^{-1}\tr(\varphi)\tr(\mbox{S}^d(\varphi^{-1})t^d \nonumber\\
	& = \frac{1}{|G|}\sum_{\varphi\in G}\frac{\chi(\varphi)\tr(\varphi^{-1})}{\det(I-t\varphi)}.
	\end{align}
	The computation of $P(\C[W]^{\chi};t)$ follows analogously: 
	\begin{equation}\label{pker}
	P(\C[W]^{\chi};t) = \frac{1}{|G|} \sum_{\varphi\in G}\frac{\chi(\varphi)}{\det(I-t\varphi)}.
	\end{equation}	
	Combining \eqref{pve} and \eqref{pker} we conclude that 
	\[
	P(V^{\chi};t) = P((\C[W]\otimes W)^\chi;t) - tP(\C[W]^\chi;t) =\frac{1}{|G|} \sum_{\varphi\in G}\frac{\chi(\varphi)(\mbox{tr}(\varphi^{-1})-t)}{\det(I-t\varphi)}.
	\]	
\end{proof}

\section{Primitive groups}\label{primitive}
This section is devoted to prove propositions \ref{molien1} and \ref{molien2}. Let $\aut(\F)< \pgl(3,\C)$ be a transitive primitive group. We can take a finite group ${G}<\gl(3,\C)$ whose image in $\pgl(3,\C)$ is equal to 
$\aut(\F)$. Let $v$ be   a homogeneous vector field in $\mathbb{C}^3$ with $\dv (v)=0$ that induces $\F$. Then $v$ is $G$-semi-invariant. Note that this works for any choice of $G$ since $v$ is homogeneous. 

In the following subsections we analyze each of the possibilities -- described in the subsections \ref{tpg} and \ref{tpsg} -- for $\aut(\F)$.  For each of these cases we use  Theorem \ref{molienthm} to determine the
possible vector fields $v$ such that $|\aut(\F)|\ge 3(d^2+d+1)$. After our study, the proofs of the propositions \ref{molien1} and \ref{molien2} will be clearly finished.   All representations and character tables come from \cite{ATLAS}, except for the Hessian group that comes from \cite{BER}. We also remark that we have used Maple\texttrademark\, to speed up calculations of the Poincaré series, the same could be achieved in any computer algebra system or even by hand.

\subsection{The Hessian Group}To do the computations we will follow \cite{BER}.
Suppose that $\aut(\F)$ is the Hessian group. Then we can take $G$ equal to the triple cover of order $648$ in $\gl(3,\C)$ generated by the pseudo-reflections 
\[
R_1=\left(\begin{array}{ccc}
1 & 0 & 0 \\
0 & 1 & 0 \\
0 & 0 & \lambda^2
\end{array}\right), \, 
R_2=\frac{1}{\sqrt{-3}}\left(\begin{array}{ccc}
\lambda & \lambda^2 & \lambda^2 \\
\lambda^2 & \lambda & \lambda^2 \\
\lambda^2 & \lambda^2 & \lambda
\end{array}\right), \, 
R_3=\left(\begin{array}{ccc}
1 & 0 & 0 \\
0 & \lambda^2 & 0 \\
0 & 0 & 1
\end{array}\right),
\]
where $\lambda$ is a primitive cubic root of unity. The multiplicative characters are determined by their image of any element on the conjugacy class of $R_1$, see \cite{BER}, hence there are three possibilities: the trivial character $\chi_0(R_1)=1$ and $\chi_i(R_1)=\lambda^i$ for $i=1,2$. Then we compute the functions:

\begin{align}\label{hessfun}
P(V^G;t) & = \frac{t^{19}+t^{16}+t^{13}+t^{10}+t^7+t^4}{(1-t^9)(1-t^{12})(1-t^{18})}  = t^4+t^7+t^{10}+2t^{13}+3t^{16} + \dots \nonumber\\
P(V^{\chi_1};t) & = \frac{t^{31}+t^{28}+t^{25}+t^{22}+t^{19}+t^{16}}{(1-t^9)(1-t^{12})(1-t^{18})} =t^{16}+t^{19}+t^{22}+ 2t^{25}+3t^{28}+ \dots \nonumber\\
P(V^{\chi_2};t) & = \frac{-t^{37}+t^{28}+t^{25}+t^{22}+2t^{19}+t^{16}+t^{13}}{(1-t^9)(1-t^{12})(1-t^{18})} = t^{13}+t^{16}+2t^{19} + \dots
\end{align} 

The inequality  $216 = |\aut(\F)| \ge 3(d^2+d+1)$ implies that $d\leq7$. By the formulas (\ref{hessfun}),  the possible degrees are $d=4,7$ and there is exactly one foliation with each degree. Then, these foliations are necessarily the Hessian foliations  $\Hf_4$ and  $\mathcal{H}_7$.

\subsection{The normal subgroup $E$ of the Hessian Group}
Suppose that $\aut(\F)$ is the subgroup $E$ of the Hessian group as presented in Subsection \ref{tpg}. From the inequality $36 = |\aut(\F)| \ge 3(d^2+d+1)$, we see that $d=2$ is the only possibility. It is not difficult to prove that the orbits in $\mathbb{P}^2$  by the action of $E$ have at least $6$ elements.  Thus, since $\sing(\mathcal{F})$ is a union of orbits of $E$ and  $|\sing(\mathcal{F})|\le d^2+d+1=7$ we conclude that $\sing(\mathcal{F})$ is an orbit $\mathcal{O}$ of $E$. Then
$\F$ has at least $6$ singularities, all of them having the same Milnor number $\mu\in\mathbb{N}$. Since the sum of these Milnor numbers is equal to $7$, we necessarily have $\mu=1$ and $|\mathcal{O}|=7$, which is a contradiction because $7$ does not divide $36$.
Thus there is no foliation $\F$ satisfying  our hypotheses. 

\subsection{The normal subgroup $F$ of the Hessian Group}
Suppose that $\aut(\F)$ is the subgroup $F$ of the Hessian group as presented in Subsection \ref{tpg}. From the inequality $72 = |\aut(\F)| \ge 3(d^2+d+1)$, we obtain $d\le 4$. Clearly the set $\mathcal{O}_{12}\subset\mathbb{P}^2$  composed by the twelve singularities of the singular cubics of the Hesse pencil is invariant by $F$ -- see \cite{AD} for the explicit list of this points. By a straightforward computation we can verify that $\mathcal{O}_{12}$ is actually an orbit of $F$. Observe that the point $[0:0:1]\in \mathcal{O}_{12}$ is fixed by the subgroup $F_0\triangleleft F$ generated by $S$ and 
$$T^2V^2\colon [X:Y:Z]\mapsto[Y:X:Z].$$ It is easy to see that $F_0$ does not fix any line through $[0:0:1]$.  Then $[0:0:1]$ is necessarily a singularity of $\F$, otherwise the tangent line to $\F$ at $[0:0:1]$ would be fixed by $F_0$.  
Then the twelve points of $\mathcal{O}_{12}$ are singularities of $\F$. These singularities are simple because $\F$ has  at most $d^2+d+1\le 21$ singularities counting multiplicities. We can not have $\F=\mathcal{O}_{12}$ because the equation $d^2+d+1=12$ has no solutions.
Then $\F$ contains at least one orbit other than $\mathcal{O}_{12}$, which has at most nine points. We need the following fact about the group $F$, which is not difficult to prove:  the set of nine base points of the Hesse Pencil is an orbit of $F$ -- we denoted it by $\mathcal{O}_9$ --  and any other orbit has more than nine points.  Thus, we deduce that $\sing(\F)=\mathcal{O}_9\cup\mathcal{O}_{12}$ and that $\F$ has only simple singularities. Recall that a foliation with simple singularities is determined by its singular set -- see \cite{CO}. Then, since $\mathcal{H}_4$ has simple singularities and
$\sing(\mathcal{H}_4)=\mathcal{O}_9\cup\mathcal{O}_{12}$, we conclude that $\F=\mathcal{H}_4$.

\subsection{The Icosahedral group ${A}_5$} 
This group has a lift to $\SL(3,\C)$ that is isomorphic to itself. We can use the following presentation:
\[
G=\left< A,B | A^2=B^3=(AB)^5=1 \right>
\]
with generators
\[
A=\left(\begin{array}{ccc}
-1 & 0 & 0 \\
0 & -1 & 0 \\
r & r & 1
\end{array}\right), \, 
B=\left(\begin{array}{ccc}
0 & 1 & 0 \\
0 & 0 & 1 \\
1 & 0 & 0
\end{array}\right),
\]
where $r$ is the golden ratio $\frac{1+\sqrt{5}}{2}$. Since $G$ is a simple group, the only possible multiplicative character is the trivial one, hence

\begin{equation}\label{icosfun}
P(V^{G};t) = \frac{-t^{16}+t^{14}+t^{10}+t^9+t^6+t^5}{(1-t^{10})(1-t^6)(1-t^2)} = t^5 + t^6+ t^7+ t^8+ 2t^9+\dots 
\end{equation}
However, the inequality $60\ge 3(d^2+d+1)$ implies $d\leq 3$, so  that no foliation is under our hypotheses.

\subsection{The Klein Group $\mbox{PSL}(2,7)$} This group is the celebrated automorphism group of the Klein quartic of equation $X^3Y+Y^3Z+Z^3X=0$ which attains the Hurwitz bound; it has genus $3$ and $168=84(g-1)$ automorphisms. The Klein group is simple and it has a representation in $\SL(3,\C)$ given by
\[
G=\mbox{PSL}(2,7) =  \left< A,B | A^2=B^3=(AB)^7=[A,B]^4=1 \right>
\]
with generators
\[
A=\left(\begin{array}{ccc}
1 & -1-r & r \\
0 & -1 & 0 \\
0 & 0 & -1
\end{array}\right), \, 
B=\left(\begin{array}{ccc}
0 & 1 & 0 \\
0 & 0 & 1 \\
1 & 0 & 0
\end{array}\right),
\]
where $r=\frac{-1+\sqrt{-7}}{2}$. Since this group is simple,  we only have the trivial character and therefore

\begin{equation}\label{kleinfun}
P(V^{G};t) = \frac{-t^{22}+t^{18}+t^{16}+t^{11}+t^9+t^8}{(1-t^{14})(1-t^6)(1-t^4)} = t^8+ t^9+t^{11}+t^{12}+t^{13}+\dots
\end{equation}
The inequality $168\ge 3(d^2+d+1)$ implies that $d\leq 6$, so that no foliation satisfies our hypotheses.

\subsection{The Valentiner group ${A}_6$} This group has a perfect triple cover   in $\SL(3,\C)$ of order $1080$ given by
\[
G = \left< A,B | A^2=B^4=(AB)^{15}=(AB^2)^5=[(AB)^5,A]=1 \right>
\]
with generators
\[
A=\left(\begin{array}{ccc}
-1 & 0 & 0 \\
0 & 0 & 1 \\
0 & 1 & 0
\end{array}\right), \, 
B=\left(\begin{array}{ccc}
0 & 1 & 0 \\
-1 & 0 & 0 \\
\alpha & \beta & 1
\end{array}\right),
\]
where $\alpha=-\xi^7 -\xi^{13}$, $\beta=-\xi^2 -\xi^{8}$ for some  primitive $15$th root of unity $\xi$. Although the Valentiner group is not simple, it is perfect -- this means $[G,G]=G$. Then we have only the trivial character and therefore
\begin{equation}\label{valentfun}
P(V^{G};t) = \frac{-t^{46}+t^{40}+t^{34}+t^{25}+t^{19}+t^{16}}{(1-t^{30})(1-t^{12})(1-t^6)} = t^{16} +t^{19}+t^{22}+2t^{25}+ \dots
\end{equation}
Also in this case there is no foliation under our hypotheses, since the inequality $360\ge3(d^2+d+1)$ implies $d\leq 10$.


\begin{thebibliography}{10}
	
	\bibitem{ATLAS}
	R.~Abbott, J.~Bray, S.~Linton, S.~Nickerson, S.~Norton, R.~Parker, I.~Suleiman,
	J.~Tripp, P.~Walsh, and R.~Wilson.
	\newblock Atlas of finite group representations-version 3.
	\newblock {\em published electronically at http://brauer. maths. qmul. ac.
		uk/Atlas/v3}, 2015.
	
	\bibitem{AD}
	M.~Artebani and I.~Dolgachev.
	\newblock The {H}esse pencil of plane cubic curves.
	\newblock {\em Enseign. Math. (2)}, 55\penalty0 (3-4):\penalty0 235--273, 2009.
	
	\bibitem{BER}
	M.~Benard.
	\newblock Schur indices and splitting fields of the unitary reflection groups.
	\newblock {\em J. Algebra}, 38\penalty0 (2):\penalty0 318--342, 1976.
	
	\bibitem{Bru}
	M.~Brunella.
	\newblock {\em Birational geometry of foliations}, volume~1 of {\em IMPA
		Monographs}.
	\newblock Springer, Cham, 2015.
	\newblock ISBN 978-3-319-14309-5; 978-3-319-14310-1.
	\newblock xiv+130 pp.
	
	\bibitem{CO}
	A.~Campillo and J.~Olivares.
	\newblock A plane foliation of degree different from 1 is determined by its
	singular scheme.
	\newblock {\em C. R. Acad. Sci. Paris S\'{e}r. I Math.}, 328\penalty0
	(10):\penalty0 877--882, 1999.
	
	\bibitem{CS}
	D.~Cerveau and P.~Sad.
	\newblock Liouvillian integration and {B}ernoulli foliations.
	\newblock {\em Trans. Amer. Math. Soc.}, 350\penalty0 (8):\penalty0 3065--3081,
	1998.
	
	\bibitem{autfol}
	M.~Corr\^{e}a and A.~Muniz.
	\newblock Polynomial bounds for automorphisms groups of foliations.
	\newblock {\em Rev. Mat. Iberoam.}, 35\penalty0 (4):\penalty0 1153--1194, 2019.
	
	\bibitem{CF}
	M.~Corr\^{e}a, Jr. and T.~Fassarella.
	\newblock On the order of the automorphism group of foliations.
	\newblock {\em Math. Nachr.}, 287\penalty0 (16):\penalty0 1795--1803, 2014.
	
	\bibitem{D}
	G.~Darboux.
	\newblock Mémoire sur les équations différentielles algébriques du premier
	ordre et du premier degré.
	\newblock {\em Bulletin des Sciences Mathématiques et Astronomiques},
	2\penalty0 (1):\penalty0 60--96, 1878.
	
	\bibitem{gsv}
	X.~G\'{o}mez-Mont, J.~Seade, and A.~Verjovsky.
	\newblock The index of a holomorphic flow with an isolated singularity.
	\newblock {\em Math. Ann.}, 291\penalty0 (4):\penalty0 737--751, 1991.
	
	\bibitem{J}
	J.~P. Jouanolou.
	\newblock {\em \'{E}quations de {P}faff alg\'{e}briques}, volume 708 of {\em
		Lecture Notes in Mathematics}.
	\newblock Springer, Berlin, 1979.
	\newblock ISBN 3-540-09239-0.
	\newblock v+255 pp.
	
	\bibitem{K}
	F.~Klein.
	\newblock {\em Lectures on the icosahedron and the solution of equations of the
		fifth degree}.
	\newblock Dover Publications, Inc., New York, N.Y., revised edition, 1956.
	\newblock xvi+289 pp.
	\newblock Translated into English by George Gavin Morrice.
	
	\bibitem{MP}
	D.~Mar\'{\i}n and J.~V. Pereira.
	\newblock Rigid flat webs on the projective plane.
	\newblock {\em Asian J. Math.}, 17\penalty0 (1):\penalty0 163--191, 2013.
	
	\bibitem{BLI}
	G.~A. Miller, H.~F. Blichfeldt, and L.~E. Dickson.
	\newblock {\em Theory and applications of finite groups}.
	\newblock Dover Publications, Inc., New York, 1961.
	\newblock xvii+390 pp.
	
	\bibitem{invar}
	M.~D. Neusel and L.~Smith.
	\newblock {\em Invariant theory of finite groups}, volume~94 of {\em
		Mathematical Surveys and Monographs}.
	\newblock American Mathematical Society, Providence, RI, 2002.
	\newblock ISBN 0-8218-2916-5.
	\newblock viii+371 pp.
	
	\bibitem{PS}
	J.~V. Pereira and P.~F. S\'{a}nchez.
	\newblock Transformation groups of holomorphic foliations.
	\newblock {\em Comm. Anal. Geom.}, 10\penalty0 (5):\penalty0 1115--1123, 2002.
	
	\bibitem{PSA}
	J.~V. Pereira and P.~F. S\'{a}nchez.
	\newblock Automorphisms and non-integrability.
	\newblock {\em An. Acad. Brasil. Ci\^{e}nc.}, 77\penalty0 (3):\penalty0
	379--385, 2005.
	
	\bibitem{gor}
	S.~S.-T. Yau and Y.~Yu.
	\newblock Gorenstein quotient singularities in dimension three.
	\newblock {\em Mem. Amer. Math. Soc.}, 105\penalty0 (505):\penalty0 viii+88,
	1993.
	
\end{thebibliography}
\end{document}